\newtheorem{theorem}{Theorem}[section]
\newtheorem{proposition}[theorem]{Proposition}
\newtheorem{lemma}[theorem]{Lemma}
\newtheorem{conjecture}[theorem]{Conjecture}
\newtheorem{remark}[theorem]{Remark}
\newtheorem{example}[theorem]{Example}
\begin{document}
	
	\title[Cardinality minimal bases]{On the cardinality of irredundant and minimal  bases of finite permutation groups}
	
	\author[F. Dalla Volta]{Francesca Dalla Volta}
	\address{Francesca Dalla Volta, Dipartimento di Matematica Pura e Applicata,\newline
		University of Milano-Bicocca, Via Cozzi 55, 20126 Milano, Italy} 
	\email{f.dallavolta@unimib.it}

	\author[F. Mastrogiacomo]{Fabio Mastrogiacomo}
	\address{Fabio Mastrogiacomo, Dipartimento di Matematica ``Felice Casorati", University of Pavia, Via Ferrata 5, 27100 Pavia, Italy} 
	\email{fabio.mastrogiacomo01@universitadipavia.it}

	\author[P. Spiga]{Pablo Spiga}
	\address{Pablo Spiga, Dipartimento di Matematica Pura e Applicata,\newline
		University of Milano-Bicocca, Via Cozzi 55, 20126 Milano, Italy} 
	\email{pablo.spiga@unimib.it}
	\subjclass[2010]{primary 20B15}
	\keywords{base size, irredundant base, minimal bases}        
	\thanks{The authors are members of the GNSAGA INdAM research group and kindly acknowledge their support.}
	\maketitle
	\begin{abstract}  
		Given a finite permutation group $G$ with domain $\Omega$, we associate two subsets of natural numbers to $G$, namely $\mathcal{I}(G,\Omega)$ and $\mathcal{M}(G,\Omega)$, which are the sets of cardinalities of all the irredundant and
		minimal bases of $G$, respectively. We prove that $\mathcal{I}(G,\Omega)$ is an interval of natural numbers, whereas $\mathcal{M}(G,\Omega)$ may not necessarily form an interval. Moreover, for a given subset of natural numbers $X \subseteq \mathbb{N}$, we provide some conditions on $X$ that ensure the existence of both intransitive and transitive groups $G$ such that 
		$\mathcal{I}(G,\Omega) = X$ and $\mathcal{M}(G,\Omega) = X$.
	\end{abstract}
	
	\section{Introduction}        	          
	Let $G$ be a permutation group on $\Omega$. A subset $\Lambda=\{\omega_1,\ldots,\omega_\ell\}$ of $\Omega$ is said to be a \textit{\textbf{base}} if the pointwise stabilizer $$G_{(\Lambda)}=G_{\omega_1,\ldots,\omega_\ell}$$
	equals the identity. Moreover, $\Lambda$ is said to be \textit{\textbf{minimal}} if no proper subset of it is a base. We denote with $b(G,\Omega)$ the smallest cardinality of a 
	(minimal) base of $G$ and with $B(G,\Omega)$ the maximum cardinality of a minimal base of $G$.
	
	Given an ordered sequence $(\omega_1,\ldots,\omega_\ell)$ of elements of $\Omega$, we study the associated stabilizer chain:
	$$G\ge G_{\omega_1}\ge G_{\omega_1,\omega_2}\ge \cdots \ge G_{\omega_1,\omega_2,\ldots,\omega_\ell}.$$
	If all the inclusions given above are strict, then the stabilizer chain is called \textit{\textbf{irredundant}}. If, furthermore, the group $G_{\omega_1,\ldots,\omega_\ell}$ is the identity, then the sequence $(\omega_1,\ldots,\omega_\ell)$ is called an \textit{\textbf{irredundant base}}. The size of the longest possible irredundant base is denoted $I(G,\Omega)$. Note that an irredundant base is not a base, because it is an ordered sequence and not a set.  However, each minimal base gives rise to an irredundant base. Therefore,
	$$b(G,\Omega)\le B(G,\Omega)\le I(G,\Omega).$$
	
	In this paper, given a finite permutation group $G$ with domain $\Omega$, we are interested in two subsets of integers associated to $G$ arising from irredundant bases and from  minimal bases. We let
	\begin{align*}
		\mathcal{I}(G,\Omega):=\{\ell\in\mathbb{N}\mid &\exists \omega_1,\ldots,\omega_\ell\in \Omega\hbox{ such that }(\omega_1,\ldots,\omega_\ell)
		\hbox{ is an irredudant base for }G\},\\
		\mathcal{M}(G,\Omega):=\{\ell\in\mathbb{N}\mid &\exists \omega_1,\ldots,\omega_\ell\in \Omega\hbox{ such that }\{\omega_1,\ldots,\omega_\ell\}
		 \hbox{ is an minimal base for }G\}.
	\end{align*}
	Our main results describe the subsets of $\mathbb{N}$ that arise as $\mathcal{I}(G,\Omega)$ and $\mathcal{M}(G,\Omega)$.
	\begin{theorem}\label{thrm:1}
		Let $G$ be a finite permutation group with domain $\Omega$. Then $\mathcal{I}(G,\Omega)$ is an interval of natural number, that is, $\mathcal{I}(G,\Omega)=\{b(G,\Omega),b(G,\Omega)+1,\ldots,B(G,\Omega)\}.$
	\end{theorem}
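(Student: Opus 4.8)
The plan is to prove the two inclusions $\mathcal{I}(G,\Omega)\subseteq\{b,b+1,\dots,I\}$ and $\{b,b+1,\dots,I\}\subseteq\mathcal{I}(G,\Omega)$, where throughout I write $b=b(G,\Omega)$ and $I=I(G,\Omega)$ for the smallest and largest sizes of an irredundant base of $G$, so that $I$ is the right-hand endpoint of the interval in the statement. The first inclusion is immediate: an irredundant base is in particular a base, so its size is at least $b$, and it is at most $I$ by the very definition of $I$. Hence the real content is that $\mathcal{I}(G,\Omega)$ has no gaps between $b$ and $I$.

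The engine of the argument is the remark that base size is ``$1$-Lipschitz'' along point stabilisers: for any permutation group $H$ on $\Omega$ and any $\alpha\in\Omega$,
\[
b(H)-1\ \le\ b(H_\alpha)\ \le\ b(H).
\]
The upper bound holds because a base of $H$ is a base of the subgroup $H_\alpha$; for the lower bound, if $\Delta$ is a minimal base of $H_\alpha$ then $(H_\alpha)_{(\Delta)}=H_\alpha\cap H_{(\Delta)}=1$, so $\{\alpha\}\cup\Delta$ is a base of $H$ and $b(H)\le 1+b(H_\alpha)$. I would also record the elementary fact that every permutation group $H$ on $\Omega$ has an irredundant base of length exactly $b(H)$: listing a minimal base of $H$ of size $b(H)$ by repeatedly choosing a point moved by the current stabiliser yields a strictly descending stabiliser chain, and minimality of the chosen base prevents this process from reaching the identity before, or continuing past, all $b(H)$ points. (Equivalently: a minimal base, read in any order, is an irredundant base.)

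With these in hand, the main step: fix a longest irredundant base $(\tau_1,\dots,\tau_I)$ of $G$ and put $H_j:=G_{\tau_1,\dots,\tau_j}$ for $0\le j\le I$, so that $H_0=G$, $H_I=1$, and each $H_{j+1}=(H_j)_{\tau_{j+1}}$ is a point stabiliser in $H_j$. For each $j$, concatenating the (automatically irredundant) prefix $(\tau_1,\dots,\tau_j)$ with an irredundant base of $H_j$ of length $b(H_j)$ produces an irredundant base of $G$, of length $\varphi(j):=j+b(H_j)$; thus $\varphi(j)\in\mathcal{I}(G,\Omega)$ for all $j$. Now $\varphi(0)=b$ and $\varphi(I)=I$, while the Lipschitz estimate applied with $H=H_j$ and $\alpha=\tau_{j+1}$ gives
\[
\varphi(j+1)-\varphi(j)=1+\bigl(b(H_{j+1})-b(H_j)\bigr)\in\{0,1\}.
\]
So $\varphi$ runs from $b$ up to $I$ over $j=0,\dots,I$ in steps of $0$ or $1$, hence its image is all of $\{b,b+1,\dots,I\}$, and every such integer lies in $\mathcal{I}(G,\Omega)$.

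I expect the main obstacle to be isolating the inequality $b(H_\alpha)\ge b(H)-1$ and recognising that it is exactly what drives the proof: it forces the ``closing-off'' correction term $b(H_j)$ to change by at most one each time $j$ increases by one. The surrounding bookkeeping (prefixes of irredundant bases are irredundant; each $H_j$ admits an irredundant base of length $b(H_j)$) is routine, but one genuinely needs the concatenated chain to be \emph{strictly} descending, which is why the minimal-base-in-any-order remark must be invoked rather than an arbitrary base of size $b(H_j)$.
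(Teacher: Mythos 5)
Your proof is correct, but it takes a genuinely different route from the paper's (which is Cameron's). The paper argues by contradiction and infinite descent: given irredundant bases $(a_1,\dots,a_x)$ and $(b_1,\dots,b_y)$ with $x<z<y$ and assuming no irredundant base of length $z$ exists, it forms the base $(b_1,\dots,b_{z-x},a_1,\dots,a_x)$, notes that only the $a$'s can be redundant in it, deletes them down to a strictly smaller but still nonempty subtuple, and iterates to produce an infinite strictly decreasing sequence of positive integers. Your argument is instead constructive and runs down a single stabiliser chain: the key lemma is the $1$-Lipschitz estimate $b(H)-1\le b(H_\alpha)\le b(H)$, which forces $\varphi(j)=j+b(H_j)$ to move in steps of $0$ or $1$ from $b(G,\Omega)$ at $j=0$ to $I(G,\Omega)$ at $j=I$, with each value of $\varphi$ realised by concatenating a prefix of a longest irredundant base with a minimum-length irredundant base of the residual stabiliser $H_j$. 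Your route isolates a reusable local inequality and produces an explicit witness for each length, whereas Cameron's descent needs no auxiliary lemma but is purely existential; both are elementary and both in fact show that any two irredundant base lengths have all intermediate values attained. The supporting facts you invoke (prefixes of irredundant bases are irredundant; a minimal base read in any order is an irredundant base, so $H_j$ admits one of length exactly $b(H_j)$; the concatenated chain is strictly descending and terminates at the identity) are all correctly justified. One remark on the statement itself: as printed, the right-hand endpoint of the interval is $B(G,\Omega)$, the largest cardinality of a \emph{minimal} base, but the correct endpoint --- the one you prove, and the only one consistent with the paper's own footnote about $\mathrm{GL}_4(2)$ acting on $2$-subspaces, where $B(G,\Omega)=4$ while irredundant bases of length $5$ exist --- is $I(G,\Omega)$, the longest irredundant base length.
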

	\begin{theorem}\label{thrm:2}
		Let $X$ be an arbitrary non-empty subset of positive integers. Then, there exists a permutation group $G$ with domain $\Omega$ such that $X = \mathcal{M}(G,\Omega)$.
	\end{theorem}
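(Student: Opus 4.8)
The plan is to realize $X$ by an elementary abelian $2$-group acting on a disjoint union of coset spaces, reducing the computation of $\mathcal{M}$ to a combinatorial statement about families of subspaces.

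First I would record a reduction. Let $V$ be an $\mathbb{F}_2$-vector space, regarded as an elementary abelian $2$-group, let $\{U_i\}_{i\in I}$ be a family of pairwise distinct subspaces with $\bigcap_{i\in I}U_i=0$, and let $\Omega=\bigsqcup_{i\in I}V/U_i$, on which $V$ acts by translation in each block. Since $V$ is abelian, every point of the block $V/U_i$ has stabiliser exactly $U_i$; hence $V_{(\Lambda)}=\bigcap\{U_i:\Lambda\cap(V/U_i)\neq\emptyset\}$ for every $\Lambda\subseteq\Omega$, and any $\Lambda$ meeting some block twice fails to be irredundant as a set. It follows that a finite $\Lambda$ is a minimal base exactly when it meets each block at most once and $\mathcal{S}(\Lambda):=\{U_i:\Lambda\cap(V/U_i)\neq\emptyset\}$ is inclusion-minimal among subfamilies of $\{U_i\}$ with trivial intersection, and then $|\Lambda|=|\mathcal{S}(\Lambda)|$. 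Therefore $\mathcal{M}(V,\Omega)$ is exactly the set of cardinalities of \emph{finite} inclusion-minimal subfamilies of $\{U_i\}_{i\in I}$ with trivial intersection (infinite minimal subfamilies may occur but never contribute, since $\mathcal{M}$ consists of natural numbers by definition).

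Now write $X=\{n_1<n_2<\cdots\}$ and take $V=\bigoplus_{l\geq1}\mathbb{F}_2e_l$. As subspaces I would use the coordinate hyperplanes $H_i=\langle e_l:l\neq i\rangle$ for all $i\geq1$, together with $K_j=\langle e_l+e_{n_j}:1\leq l\leq n_j-1\rangle$ for each $j$ with $n_j\in X$ (so $\dim K_j=n_j-1$, with $K_j=0$ when $n_j=1$; these subspaces are pairwise distinct, and $\bigcap_iH_i=0$, so the reduction applies). The first observation is that $K_1\subseteq K_2\subseteq\cdots$, because $e_l+e_{n_j}=(e_l+e_{n_{j+1}})+(e_{n_j}+e_{n_{j+1}})$ and $l<n_j<n_{j+1}$; hence an inclusion-minimal family with trivial intersection contains at most one $K_j$. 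A family using no $K_j$, say $\{H_i:i\in A\}$, has intersection $\langle e_l:l\notin A\rangle$, which is $0$ only when $A$ is all of $\mathbb{N}$, i.e.\ only for the infinite family of all the $H_i$; so such families contribute nothing to $\mathcal{M}$. Finally, for a family $\{K_j\}\cup\{H_i:i\in A\}$ one computes $K_j\cap\bigcap_{i\in A}H_i$ from the description of a general element $\sum_{l<n_j}c_l(e_l+e_{n_j})$ of $K_j$: its dimension is $|\{1,\dots,n_j-1\}\setminus A|$ when $n_j\notin A$ and $\max\bigl(0,\,|\{1,\dots,n_j-1\}\setminus A|-1\bigr)$ when $n_j\in A$. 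A short analysis then shows that the inclusion-minimal such families are precisely $\{K_j,H_1,\dots,H_{n_j-1}\}$ and the families $\{K_j,H_{n_j}\}\cup\{H_i:i<n_j,\ i\neq i_0\}$ with $1\leq i_0<n_j$, all of cardinality $n_j$. Combining the three cases gives $\mathcal{M}(V,\Omega)=\{n_j\}_j=X$, as required. When $X$ is finite one may instead take $V=\mathbb{F}_2^{n_k}$ with only finitely many hyperplanes, obtaining a finite group, but the construction above already proves the theorem in all cases.

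The step I expect to be the main obstacle is the last verification: showing that combining $K_j$ with the ``wrong'' coordinate hyperplanes — in particular with $H_{n_j}$ — can never produce an inclusion-minimal family whose size differs from $n_j$. The two facts that make this go through are that $K_j\subseteq H_i$ for every $i>n_j$ (so those hyperplanes are always redundant alongside $K_j$) and the dimension formula above, which forces such a family to acquire trivial intersection exactly at cardinality $n_j$ and not before.
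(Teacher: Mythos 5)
Your proof is correct, and its core mechanism is the same as the paper's: an elementary abelian group acting on a disjoint union of blocks, so that minimal bases correspond bijectively to inclusion-minimal families of point-stabiliser subspaces with trivial intersection. (The paper phrases this with generators $g_{i,j}$ glued across disjoint regular orbits rather than with coset spaces $V/U_i$, but the resulting permutation groups are of exactly the same kind: the stabiliser of a point of $\Delta_j$ is a coordinate subspace and that of a point of $\Delta_{n,i}$ is a coordinate hyperplane.) The differences are worth recording. First, because the paper's family of subspaces always contains the zero subspace, its direct construction only realises sets $X$ with $1\in X$; the case $1\notin X$ is handled by a separate bootstrapping step, forming the intransitive direct product with $\mathrm{Sym}(x_1)$ and invoking Lemma~\ref{lemmaSumBase}. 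Your subspaces $K_j=\langle e_l+e_{n_j} : l<n_j\rangle$ are chosen so that every inclusion-minimal family containing $K_j$ has size exactly $n_j$ (I checked the dimension formula for $K_j\cap\bigcap_{i\in A}H_i$ and the resulting classification of minimal families; both are correct), which removes the case split and the auxiliary lemma. Second, your infinite-dimensional version covers infinite $X$, which the paper's proof (written for $X=\{x_1,\ldots,x_n\}$) does not address; for finite $X$ your variant $V=\mathbb{F}_2^{n_k}$ stays within finite groups, matching the paper, with the only degenerate point being $X=\{1\}$, where $H_1=K_1=0$ coincide and one should simply take a regular group of order $2$.
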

	\begin{theorem}\label{thrm:3}
		Let $X$ be an interval of positive integers, not containing $1$. Then, there exists a finite transitive permutation group $G$ with domain $\Omega$ such that  $X=\mathcal{M}(G,\Omega)$ and a finite transitive permutation group $H$ with domain $\Delta$ such that $X = \mathcal{I}(H,\Delta)$.
	\end{theorem}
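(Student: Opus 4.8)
The plan is to deduce both assertions from the construction of a single transitive group whose minimal‑base spectrum is the prescribed interval, and to build that group as a wreath product acting in product action with carefully chosen parameters. Throughout write $X=\{m,m+1,\dots ,n\}$ with $m\ge 2$.

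First I would make two reductions. By Theorem~\ref{thrm:1} we have $\mathcal I(H,\Delta)=\{b(H,\Delta),\dots ,B(H,\Delta)\}$ for every finite permutation group $H$, so the $\mathcal I$‑statement is equivalent to producing a transitive $H$ with $b(H,\Delta)=m$ and $B(H,\Delta)=n$. Moreover $\min\mathcal M(G,\Omega)=b(G,\Omega)$ (a smallest base is automatically minimal) and $\max\mathcal M(G,\Omega)=B(G,\Omega)$ by definition, so any transitive $G$ with $\mathcal M(G,\Omega)=X$ already satisfies $\mathcal I(G,\Omega)=\{b(G,\Omega),\dots ,B(G,\Omega)\}=X$; hence it suffices to prove the $\mathcal M$‑part, the $\mathcal I$‑part then following with the same group (the separate group $H$ in the statement merely allows a lighter construction). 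I would also record why $1\notin X$ is necessary: a transitive group with $1\in\mathcal M(G,\Omega)$ has a point with trivial stabiliser, hence is regular, hence $\mathcal M(G,\Omega)=\{1\}$.

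For the construction I would start from the block $T=\mathrm{Sym}(k)$ acting on the set $\Gamma$ of injective $c$‑tuples over $\{1,\dots ,k\}$, equivalently on the coset space of the Young subgroup $\mathrm{Sym}(k-c)$. A finite subset of $\Gamma$ is a base for $T$ exactly when the entries occurring in those tuples exhaust all but at most one point of $\{1,\dots ,k\}$, so bases correspond to families of $c$‑subsets covering $k-1$ or $k$ points, and minimal bases to the minimal such families. This gives $b(T)=\lceil (k-1)/c\rceil$ (greedily packing disjoint $c$‑subsets) and $B(T)=k-c$ (a ``sunflower'' with a core of size $c-1$), and interpolating between a tight packing and a sunflower shows $\mathcal M(T,\Gamma)=\{\lceil (k-1)/c\rceil,\dots ,k-c\}$. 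Whenever the system $k-c=n$, $\lceil (k-1)/c\rceil=m$ has a solution in positive integers this already settles the $\mathcal M$‑part with the transitive group $T$. For the remaining intervals I would pass to $G=T\wr\mathrm{Sym}(s)$ in \emph{product} action on $\Gamma^{s}$, which is again transitive. Viewing a subset of $\Gamma^{s}$ as an array whose $s$ columns lie in $\Gamma$, one checks that it is a base for $G$ precisely when every column is a base for $T$ and the columns lie in pairwise distinct $T$‑orbits; this expresses $b(G)$ and $B(G)$ through $b(T)$, $B(T)$, $s$ and the number of $T$‑orbits of $T$‑bases of size $b(T)$, and a choice of $k,c,s$ then achieves the prescribed pair $(m,n)$. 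A final interpolation — realising each intermediate cardinality by a ``staircase'' of columns whose equality patterns pairwise differ in exactly one row — yields $\mathcal M(G,\Omega)=X$.

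The step I expect to be the main obstacle is precisely reconciling transitivity with flexibility. A single‑orbit (homogeneous‑space) model is automatically transitive but rigidly ties $(b,B)$ to its parameters, and the arithmetic of $\lceil (k-1)/c\rceil$ shows that such models skip some pairs $(m,n)$; a direct product of groups tunes $b$ and $B$ freely but is never transitive. Pushing through the wreath‑product alternative requires the full combinatorial base criterion for the product action — in particular, correctly tracking that the ``top group'' $\mathrm{Sym}(s)$ genuinely contributes to base size here, unlike in the imprimitive action, where a block‑permuting element can never fix a point and so costs nothing — together with an explicit minimal base of each intermediate cardinality and a verification that no smaller base is hidden inside it. This bookkeeping, rather than any single trick, is where the real work lies.
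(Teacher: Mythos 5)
There are two genuine gaps. The first is your reduction of the $\mathcal{I}$-part to the $\mathcal{M}$-part. You argue that a transitive $G$ with $\mathcal{M}(G,\Omega)=X$ automatically has $\mathcal{I}(G,\Omega)=\{b(G,\Omega),\dots,B(G,\Omega)\}=X$. But the maximum of $\mathcal{I}(G,\Omega)$ is $I(G,\Omega)$, the length of the longest irredundant base, and this can strictly exceed $B(G,\Omega)$, the largest cardinality of a minimal base: the paper's own footnote example, $\mathrm{GL}_4(2)$ acting on the $2$-subspaces of $\mathbb{F}_2^4$, has $\mathcal{M}=\{4\}$ but $\mathcal{I}=\{4,5\}$. (The statement of Theorem~\ref{thrm:1} should be read with $I(G,\Omega)$ as the right endpoint of the interval.) So the two halves of Theorem~\ref{thrm:3} are not interchangeable, and the $\mathcal{I}$-part needs its own computation of $I(H,\Delta)$; the paper does this via the product formula $I(G_1\times\cdots\times G_t)=\sum_i I(G_i)-(t-1)$ of Lemma~\ref{aux}, with a different choice of the last factor than in the $\mathcal{M}$-part.

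The second gap is in the construction itself. Your single-orbit block $T=\mathrm{Sym}(k)$ on injective $c$-tuples is a sensible generalisation of the paper's building block (the case $c=1$ is the natural action of $S_k$), and the values $b(T)=\lceil(k-1)/c\rceil$ and $B(T)=k-c$ are correct, though the upper bound $B(T)\le k-c$ (that no minimal family of $c$-sets covering all but at most one point has more than $k-c$ members) requires a counting argument you have not supplied. But, as you yourself observe, the arithmetic of $\lceil(k-1)/c\rceil$ misses some pairs $(m,n)$ (for instance $X=\{5,6\}$), so everything rests on the wreath product $T\wr\mathrm{Sym}(s)$ in product action, which you do not analyse. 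The base criterion you state for it is wrong: $T$ is transitive on $\Gamma$, so the columns cannot lie in ``pairwise distinct $T$-orbits''; killing the top group is instead governed by whether some $t\in T$ carries the sequence of entries in one coordinate onto the sequence in another, a more delicate condition. The paper sidesteps this entirely by using the \emph{direct} product $S_{a+1}\times\cdots\times S_{a+1}$ (possibly with one further factor $S_k$) in product action --- still transitive, with no top group to control --- and obtains the missing flexibility by mixing factors of different degrees; the interval property and the exact values of $b$ and $B$ for such products are what Theorem~\ref{thrm:41} and Proposition~\ref{prodSym} provide. As written, your proposal defers exactly the step on which the theorem depends.
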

	Theorem~\ref{thrm:1} shows that the cardinalities of the irredundant bases of finite permutation groups form an interval of positive integers, and it somehow resembles a result of Tarski~\cite{Tarski}. Indeed, one of the main results in~\cite{Tarski} shows that the cardinalities of the irredundant generating sets of a finite group form an interval. Recall that a set of generators of a finite group $G$ is said to be irredundant if no proper subset of it generates $G$. This means that, if $G$ has irredundant generating sets of cardinality $x$ and $y$ with $x\le y$, then for every $z$ with $x\le z\le y$, $G$ admits an irredundant generating set of cardinality $z$. In this light, Theorem~\ref{thrm:1} is a permutation group analogue of Tarski's theorem. Despite the fact that the result of Tarski applies to general universal algebras and to general closure operations, we are not able to adapt the proof in~\cite{Tarski} to prove Theorem~\ref{thrm:1}.\\
	The proof of Theorem \ref{thrm:1} is due to Peter Cameron. It should be noted that this proof remains unpublished, except for its appearance on Cameron's blog\footnote{https://cameroncounts.wordpress.com/2023/04/15/bases-2/}. For the sake of completeness, we include the proof of Theorem~\ref{thrm:1} in Section \ref{sec1}.

	In light of this, Theorem~\ref{thrm:2} comes with a bit of a surprise, because it shows that any set of positive integers arises as $\mathcal{M}(G,\Omega)$ for some permutation group $G$. In particular, $\mathcal{M}(G,\Omega)$ is not an interval of natural numbers.
	
	The next question to explore is the behaviour of $\mathcal{M}(G,\Omega)$ when $G$ is a transitive group. Theorem~\ref{thrm:3} shows that every interval of positive integers, not containing the number $1$, can be realized as $\mathcal{M}(G,\Omega)$ for some transitive permutation group $G$ acting on its domain $\Omega$. It is important to note that if $1 \in \mathcal{M}(G,\Omega)$ and $G$ is transitive, then $\mathcal{M}(G,\Omega) = \{1\}$, because $G$ is regular on $\Omega$.	

	While this last result might suggest that $\mathcal{M}(G,\Omega)$ is always an interval when $G$ is transitive, we prove in Section \ref{SpecCase} that this is not universally true. In fact, we show that for certain non-interval subsets $X \subseteq \mathbb{N}$, there exists a transitive permutation group $G$ acting on its domain $\Omega$ such that $\mathcal{M}(G,\Omega) = X$. However, we encounter limitations when addressing the problem in a general context. For instance, we are unable to find an example with $\mathcal{M}(G,\Omega) = \{4,6\}$.	Nevertheless, we propose the following conjecture.
	\begin{conjecture}\label{conj1}
		Let $X \subseteq \mathbb{N}$, with $1 \notin X$. Then, there exists a transitive permutation group $G$ on $\Omega$ with $\mathcal{M}(G,\Omega) = X.$
	\end{conjecture}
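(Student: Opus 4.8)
The plan is to build on the two constructions already available: Theorem~\ref{thrm:2}, which realises an \emph{arbitrary} non-empty $X\subseteq\mathbb N$ as $\mathcal M(G,\Omega)$ for a (typically intransitive) group, and Theorem~\ref{thrm:3}, which realises every interval not containing $1$ by a \emph{transitive} group. Viewed this way, Conjecture~\ref{conj1} is a ``transitivisation'' problem: given the intransitive witness $G\le\mathrm{Sym}(\Omega)$ produced for Theorem~\ref{thrm:2}, we want to replace it by a transitive overgroup $\widehat G$ on a domain $\widehat\Omega$ with $\mathcal M(\widehat G,\widehat\Omega)=\mathcal M(G,\Omega)=X$. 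The first thing I would try is the obvious gluing: if $\Omega=\Omega_1\sqcup\cdots\sqcup\Omega_k$ are the $G$-orbits and $G=G_1\times\cdots\times G_k$ is the direct product of the transitive constituents, one can enlarge $G$ by a transitive ``top'' group permuting the orbits (a wreath-type product), possibly after first making the orbit system symmetric by adjoining missing isomorphic copies and then wreathing with $C_k$ or $\mathrm{Sym}(k)$. One then has to compute $\mathcal M$ of an imprimitive wreath product in terms of the $\mathcal M(G_i,\Omega_i)$ and of the top group; the imprimitive action of $H\wr K$ has a transparent stabiliser chain (one first ``kills'' the base group block by block, then the complement), and I would push a minimality bookkeeping through it.

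A second, complementary, line of attack is to prove closure properties of the family
$$\mathcal T:=\{\,X\subseteq\mathbb N\mid 1\notin X\ \text{and}\ X=\mathcal M(G,\Omega)\ \text{for some transitive}\ G\,\}.$$
By Theorem~\ref{thrm:3}, $\mathcal T$ contains every interval. If one can show $\mathcal T$ is closed under translation $X\mapsto X+\{c\}$, under dilation, and — crucially — under sumset via the product action of two transitive groups, then a large class of sets would follow; for the action of $G_1\times G_2$ on $\Omega_1\times\Omega_2$ the pointwise stabiliser of a pair is the product of the coordinate stabilisers, so being a (minimal) base is governed by being a (minimal) base ``simultaneously in each coordinate'', exactly the sort of statement one can analyse combinatorially. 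The obstacle I expect to dominate is that this analysis tends to \emph{interpolate}: padding a short base in one factor by redundant points forces intermediate cardinalities to appear, so products of transitive groups seem to yield an interval rather than preserving a gap. This is precisely why the conjecture is hard, and it explains why $\{4,6\}$ — and, more ambitiously, any pair $\{a,b\}$ with $a\ge 2$ and $b-a\ge 2$ — resists the easy approaches, since a disjoint union of two transitive actions with distinct singleton $\mathcal M$ (which would manufacture such a gap) is not transitive.

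Consequently, the heart of the matter is to realise, transitively, finite sets with an \emph{isolated} element. For these I would look for a tailored primitive or quasiprimitive construction — for instance a group of Lie type acting on the cosets of a carefully chosen subgroup, or an affine or projective action — engineered so that the underlying incidence geometry rigidly controls which sub-configurations are irredundant, forcing the poset of pointwise stabilisers to have exactly two ``heights'' at which a chain can bottom out at the identity. A positive answer for a short list of such atomic sets, combined with the closure operations above and a Theorem~\ref{thrm:3}-type interval gadget, should then suffice to deduce the full conjecture; failing that, I would at least aim to settle Conjecture~\ref{conj1} for all finite $X$ that are a union of intervals whose gaps are wide enough for the wreath/product bookkeeping to keep them separated.
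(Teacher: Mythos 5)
This statement is Conjecture~\ref{conj1}: the paper does not prove it, and in fact explicitly records that the authors themselves cannot settle it even for $X=\{4,6\}$. So there is no proof in the paper to compare against, and your proposal is not a proof either --- it is a research programme in which every genuinely hard step is deferred. The concrete gap is in your third paragraph: the entire content of the conjecture beyond intervals is the realisation, by a \emph{transitive} group, of sets with an isolated element, and for this you offer only the intention to ``look for a tailored primitive or quasiprimitive construction,'' with no candidate group, no subgroup, and no verification. That is precisely the open problem restated, not an argument.

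Your two preparatory lines of attack also do not survive contact with the paper's own results. The sumset/closure route via the product action is ruled out by Theorem~\ref{thrm:41}: for \emph{any} two permutation groups, $\mathcal{M}(G\times H,\Delta\times\Lambda)$ is an interval, so the product action destroys gaps rather than preserving them; you correctly anticipate this ``interpolation'' but then the whole second paragraph contributes nothing toward non-interval targets. The wreath-product ``transitivisation'' of the first paragraph is essentially what the paper carries out in Section~\ref{SpecCase} (e.g.\ $S_n\mathrm{wr} C_3$ acting on cosets of $S_n\times S_{n-1}\times 1$, giving $\mathcal{M}=\{3,n\}$), but the bookkeeping there is delicate and yields only the specific families $\{3,n\}$, $\{2,n\}$ and $\{4,n,2n-2\}$; the paper notes explicitly that the process cannot be iterated to produce further two-element sets. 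A correct writeup of your idea would therefore have to supply the missing combinatorial analysis of minimal bases in an imprimitive wreath product in general, and there is no evidence in your sketch (or in the paper) that this analysis produces arbitrary prescribed sets $X$. As it stands, the proposal should be presented as a discussion of strategies and obstructions, not as a proof.
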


	We conclude this introductory section observing that we know very little on the structure of $\mathcal{M}(G,\Omega)$ when $G$ is primitive on $\Omega$. 
	Indeed, in the case of primitive groups, we have no example where $\mathcal{M}(G,\Omega)$ is not an interval. This leads us to propose the following conjecture.
	\begin{conjecture}\label{conj2}
		Let $G$ be a primitive permutation group on $\Omega$. Then, $\mathcal{M}(G,\Omega)$ is an interval of natural numbers.
	\end{conjecture}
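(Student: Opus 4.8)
Since $\mathcal{M}(G,\Omega)$ always contains both its minimum $b(G,\Omega)$ and its maximum $B(G,\Omega)$, the conjecture is automatic unless $B(G,\Omega)\ge b(G,\Omega)+2$; the plan is therefore to assume such a gap and to reduce the problem, via the O'Nan--Scott theorem, to a case analysis according to the type of $G$. For the almost simple primitive groups of non-standard type (excluding $\mathrm{S}_n$ and $\mathrm{A}_n$ on subsets or partitions, and classical groups on orbits of subspaces) and for the diagonal and twisted wreath product types, the base size is bounded above by an absolute constant, so only boundedly many integers can lie in $\mathcal{M}(G,\Omega)$; this severely restricts the possible ``profiles'' of $\mathcal{M}(G,\Omega)$, but ruling out those with a hole still requires a structural argument, which I expect can be carried out family by family using the rich supply of bases available for these groups.

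The genuinely hard cases are the affine type groups, the product action type groups, and the standard almost simple groups, for which $B(G,\Omega)-b(G,\Omega)$ is unbounded. For all of these I would aim to prove the following single gap-filling lemma, from which Conjecture~\ref{conj2} follows by downward induction started from a minimal base of size $B(G,\Omega)$: \emph{if $\Lambda=\{\omega_1,\ldots,\omega_n\}$ is a minimal base of $G$ with $n>b(G,\Omega)$, then $G$ admits a minimal base of size $n-1$.} The mechanism would be ``surgery'' on $\Lambda$: since $n>b(G,\Omega)$ there is a base of size smaller than $n$, and one wants either to replace a well-chosen pair $\{\omega_i,\omega_j\}\subseteq\Lambda$ by a single point, or to delete one point of $\Lambda$ and adjoin a new one, so that the resulting $(n-1)$-set is again a base. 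The subtle requirement is minimality, i.e.\ that $G_{(\Lambda'\setminus\{\omega\})}\neq 1$ for every point $\omega$ of the new set $\Lambda'$; here primitivity enters, via maximality of point stabilisers, to keep these stabilisers from collapsing to the identity prematurely, but controlling all of them simultaneously is where the main difficulty lies.

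In the affine case $G=V\rtimes H$ with $H\le\mathrm{GL}(V)$ irreducible and $\Omega=V$, the surgery becomes a problem in linear algebra over a finite field: after translating so that $0\in\Lambda$, a subset $\{0,v_1,\ldots,v_k\}$ is a base exactly when the pointwise stabiliser $H_{v_1,\ldots,v_k}$ is trivial, minimality translates into an irredundancy condition on the tuple $(v_1,\ldots,v_k)$ inside the subgroup lattice of $H$ together with a condition making $0$ non-removable, and one can try to mimic the exchange arguments used for irredundant generating sets, now relative to the module $V$. In the product action case $G\le H\wr \mathrm{S}_k$ on $\Delta^k$ one expects $\mathcal{M}(G,\Delta^k)$ to be controlled, through a combinatorial analysis of how the coordinates of the base points are distributed, by the data $\mathcal{M}(H,\Delta)$ for the almost simple component $H$, so this case should reduce to the almost simple one; making that reduction precise, and handling the standard almost simple groups ($\mathrm{S}_n$ on $k$-subsets and classical groups on subspaces) where the combinatorics of minimal bases is intricate, is likely to be the real obstacle --- and, as the failure to realise $\{4,6\}$ in Section~\ref{SpecCase} shows for transitive groups, one cannot simply assume the interval property but must extract it from the structure of each family.
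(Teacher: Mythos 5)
This statement is Conjecture~\ref{conj2}: the paper does not prove it, and explicitly says that beyond computational evidence there is no confirmation that $\mathcal{M}(G,\Omega)$ is an interval for primitive $G$. So there is no proof in the paper to compare yours against, and what you have written is a research programme rather than a proof. The decisive gap is your ``gap-filling lemma'' (every minimal base of size $n>b(G,\Omega)$ yields one of size $n-1$): this is essentially a restatement of the conjecture itself, and no actual argument is supplied for it. The ``surgery'' mechanism --- replace a pair of base points by one point, or swap a point out and a new one in --- is never shown to produce a set that is still a base, and the harder requirement, that every point of the new set be non-redundant, is exactly where such exchange arguments break down; invoking ``maximality of point stabilisers'' does not by itself prevent $G_{(\Lambda'\setminus\{\omega\})}$ from collapsing to the identity. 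Note also that the analogous statement fails for transitive groups (Section~\ref{SpecCase} exhibits $\mathcal{M}(G,\Omega)=\{3,n\}$), so any proof must use primitivity in an essential, concrete way, which your sketch does not.

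Two further points. First, your ``easy'' cases are not actually closed: knowing that $B(G,\Omega)$ is bounded by an absolute constant for non-standard almost simple, diagonal and twisted wreath types restricts $\mathcal{M}(G,\Omega)$ to a bounded window but does nothing to exclude a hole such as $\{2,4\}$; you acknowledge this but leave the family-by-family argument entirely open. Second, in the product action case the paper's own Theorem~\ref{thrm:41} shows $\mathcal{M}(G\times H,\Delta\times\Lambda)$ is an interval for the full direct product, but a primitive product-action group is a proper subgroup of $H\wr \mathrm{S}_k$ containing a transitive top group, and no reduction to the component $H$ is established there either. As it stands, every branch of the O'Nan--Scott case division is left unproved, so the proposal does not constitute a proof of the conjecture.
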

We do not believe that, for each interval $X$ of natural numbers, there exists a primitive group $G$ on $\Omega$ with $\mathcal{M}(G,\Omega)=X$. We give some evidence of this in Section~\ref{last}. 
	
A permutation group $G$ on $\Omega$ is said to be IBIS (\textit{\textbf{Irredundant Bases of Invariant Size}}) if all irredundant bases have cardinality $b(G,\Omega)$, that is, $\mathcal{I}(G,\Omega)=\{b(G,\Omega)\}$. Cameron and Fon-der-Flaass~\cite{CF} (see also~\cite[Section~4.14]{Peter}) have proved that, in a finite permutation group, the following conditions are equivalent:
\begin{itemize}
\item all irredundant bases have the same size;
\item the irredundant bases are invariant under re-ordering;
\item the irredundant bases are the bases of a matroid.
\end{itemize}	
Observe that, for every permutation group $G$ on $\Omega$, we have $\mathcal{M}(G,\Omega)\subseteq\mathcal{I}(G,\Omega)$. Therefore, we say that $G$ is MiBIS (\textit{\textbf{Minimal Bases of Invariant Size}}) if all minimal bases have cardinality $b(G,\Omega)$, that is, $\mathcal{M}(G,\Omega)=\{b(G,\Omega)\}$. It is evident that each IBIS group is also a MiBIS group, but the reverse is far from being true.\footnote{The action of $G = \mathrm{GL}_4 (2)$ on the $2$-subspaces of $\mathbb{F}_2^4$  has irredundant bases of size $4$ and $5$; however,
it can be verified with the auxiliary help of a computer that every minimal base of
$G$ has cardinality $4$. Therefore, in this action, $G$ is MiBIS, but not IBIS.} This leads us to question whether MiBIS groups possess a geometric characterization in the same vein as Cameron and Fon-der-Flaas. In addition, Lucchini, Morigi, and Moscatiello have established a theorem that reduces the problem of classifying finite primitive IBIS groups $G$ to cases where the socle of $G$ is either abelian or non-abelian simple. We also wonder whether a similar reduction applies to MiBIS groups.

	\section{Proof of Theorems~$\ref{thrm:1}$ and~\ref{thrm:2}} \label{sec1}
\begin{proof}[Proof of Theorem~$\ref{thrm:1}$]Let $G$ be a transitive group with domain $\Omega$. Let $a = (a_1 , \ldots , a_x )$ and $b = (b_1 ,\ldots , b_{y})$  be irredundant bases for $G$.
	Suppose, for a contradiction, that there is no irredundant base of length $z$,
	for some $z$ with $x < z < y$.
	Consider the tuple $(b_1, \ldots , b_{z-x} , a_1 , \ldots , a_x )$ of points in the domain $\Omega$. This is a base, since it contains $a$. By assumption it is redundant because it is of length $z$. None of the $b$s is redundant, so we
	must have to delete some of the $a$s. The number remaining is, say $x'$; we
	have $x' < x$ (since there is no irredundant base of length $z$) and $x' > 0$ (since
	the $b$s are irredundant). Let $a'$ be the tuple of $a$s of length $x'$.
	Now consider the tuple $(b_1 , \ldots , b_{z-x'} )$ with $a'$ appended. Again this is a
	base, and must be redundant, so there is a subtuple $a''$ of length $x''$ which
	forms a base with these $b$s. Again we have $0 < x'' < x'$.
	This process can be continued ad infinitum to give an infinite descending
	sequence of natural numbers less than $x$, a contradiction.
\end{proof}

	Prior to demonstrating Theorem~\ref{thrm:2} in its complete generality,  we begin by providing an illustrative example that exemplifies the fundamental concepts of the 
	proof, when $X$ is defined as $\{1,3,5,7\}$. The idea is to consider a sequence of regular elementary abelian groups with disjoint orbits, and combining their generators in order to obtain the desired group.
	
	Consider the group $$G_1 = \langle g_{11},\dots, g_{17}\rangle$$ to be a regular elementary abelian group of order $2^7$ acting on $\Delta_1$ (to construct it, just consider the elementary abelian group of order $2^7$ in its action on itself by right multiplication). Then, take the group $$ G_2 = \langle g_{21},g_{22},g_{23} \rangle $$ to be a regular elementary abelian group of order $2^3$ acting on $\Delta_2$. Then, take $$ G_3 = \langle g_{31},\dots, g_{35} \rangle $$ to be a regular elementary abelian group of order $2^5$ acting on $\Delta_3$. Suppose that the $\Delta_i$s are mutually disjoint. Finally, take $g_{41},\dots,g_{47}$ to be transpositions, each one acting on $\Delta_{41},\dots,\Delta_{47}$ respectively, disjoint from each other, and also disjoint from $\Delta_1, \Delta_2, \Delta_3$.
	Now define 
	\begin{align*}
		G = \langle 
		&h_1=g_{11}g_{21}g_{31}g_{41}, \\ 			&h_2=g_{12}g_{22}g_{32}g_{42}, \\ 
		&h_3=g_{13}g_{23}g_{33}g_{43}, \\
		&h_4=g_{14}g_{34}g_{44}, \\
		&h_5=g_{15}g_{35}g_{45}, \\
		&h_6=g_{16}g_{46},\\
		&h_7=g_{17}g_{47} 
		\rangle,
	\end{align*}
	and consider the action of $G$ on $\Omega=\Delta_1 \cup \Delta_2 \cup \Delta_3 \cup \Delta_{4,1} \cup \dots \cup \Delta_{4,7}$. We claim that
	$$\mathcal{M}(G,\Omega)=\{1,3,5,7\}.$$ Note that $G$ is an elementary abelian group of order $2^7$. Moreover, note that the $g_{i,j}$'s appearing in $h_j$ commute among themselves, because their supports are disjoint. 
	
	Let $y_1 \in \Delta_1$. Each of the $h_j$s moves $y_1$, because every generator of $G$ involves one generator of $G_1$ and because $G_1$ is regular on $\Delta_1$. Hence, the stabilizer of $y_1$ is the identity subgroup, that is, $G_{y_1}=1$. Therefore $G$ has a base of cardinality $1$ and $1\in\mathcal{M}(G,\Omega)$. In particular, for the rest of the argument, we may consider minimal bases containing no point from $\Delta_1$.
	
	 Let $y_2 \in \Delta_2$. This point is moved only by the generators of $G$ which involve generators of $G_2$, namely $h_1,h_2,h_3$. This implies that $$G_{y_2} = \langle h_4,h_5,h_6,h_7 \rangle,$$ and this acts non-trivially on $\Delta_1 \cup \Delta_{4,4} \cup \Delta_{4,5}\cup\Delta_{4,6} \cup \Delta_{4,7}$. So in order to construct a minimal base which contains $y_2$ we need to choose the other points from $\Delta_1 \cup \Delta_{4,4} \cup \Delta_{4,5}\cup\Delta_{4,6} \cup \Delta_{4,7}$. Since we are excluding the elements of $\Delta_1$, the other points are from $\Delta_{4,4} \cup \Delta_{4,5}\cup\Delta_{4,6} \cup \Delta_{4,7}.$  Observe now that, if $i\in \{1,\dots, 7\}$ and $y_{4,i} \in \Delta_{4,i}$, then $$G_{y_{4,i}} = \langle h_1,\dots,h_{i-1},h_{i+1},\dots,h_7 \rangle. $$ So, the stabilizer of $y_2$ and $y_{4,4}$ is
	\[
		G_{y_2} \cap G_{y_{4,4}} = \langle h_5,h_6,h_7 \rangle.
	\]
	It is now clear that $$\{y_2,y_{4,4},y_{4,5},y_{4,6},y_{4,7}\}$$ is a minimal base of cardinality $5$ for $G$ and $5\in\mathcal{M}(G,\Omega)$. In particular, for the rest of the argument, we may consider minimal bases containing no point from $\Delta_2$.
	
	Analogously, if $y_3 \in \Delta_3$, then $$G_{y_3} = \langle h_6, h_7\rangle,$$ which acts non-trivially on $\Delta_1 \cup \Delta_{4,6} \cup \Delta_{4,7}$. So in order to construct a minimal base of $G$ which contains $y_3$, we need to choose the other points from this set. Since we are excluding the elements of $\Delta_1$, the other points are from $ \Delta_{4,5}\cup\Delta_{4,6} \cup \Delta_{4,7}.$ As before, we can only choose an arbitrary point from $\Delta_{4,6}$ and an arbitrary point from $\Delta_{4,7}$, so that we have a minimal base of cardinality $3$ and hence $3\in\mathcal{M}(G,\Omega)$. 
	
	Finally, we can construct a minimal base using points only from $\Delta_{4,i}$ for $i=1,\dots,7$. For what we have observed before, in order to construct a minimal base we need to take an arbitrary point from each one of the orbits. In conclusion, this gives a minimal base of cardinality $7$. This marks the completion of our introductory example's proof.

\smallskip	
	
	To prove Theorem \ref{thrm:2}, we need some basic preliminaries. Let $G$ and $H$ be two permutation groups acting on $\Delta$ and $\Lambda$ respectively. We can suppose that $\Delta$ and $\Lambda$ are disjoint, eventually renaming the elements. Then, $G \times H$ acts on $\Delta \cup \Lambda$ as follows: given $x \in \Delta \cup \Lambda$ and $(g,h) \in G \times H$, we set 
	\[
		x^{(g,h)} = 
		\begin{cases}
			x^g \, \text{ if } x \in \Delta, \\
			x^h \, \text{ if } x \in \Lambda.
		\end{cases}
	\]
	Note that this action is intransitive even if $G$ and $H$ are transitive. This is in contrast with another action of the direct product, described in Section \ref{SectionThmMinBase}.\\
	From the definition of the action, we have that, for $x \in \Delta \cup \Lambda$,
	\[
		(G \times H)_x = 
		\begin{cases}
			G_x \times H \, \text{ if } x \in \Delta, \\
			G \times H_x \, \text{ if } x \in \Lambda.
		\end{cases}
	\]
	It follows that, if $B_G$ is a minimal base for $G$ and $B_H$ is a minimal base for $H$, then $B_G \cup B_H$ is a minimal base for the action of $G \times H$ on $\Delta \cup \Lambda$. Moreover, every minimal base of $G \times H$ is of this form. Indeed, suppose that $B \subseteq \Delta \cup \Lambda$ is a base for $G \times H$. Define $B_X = X \cap B$ for $X \in \{\Delta, \Lambda\}$, so that $B = B_\Delta \cup B_\Lambda$. Then,
	\[
		1 = (G \times H)_{(B)} = G_{(B_\Delta)} \times G_{(B_\Lambda)},
	\]
	So, $B_\Delta$ and $B_\Lambda$ are bases for $G$ and $H$ respectively. They are also minimal: otherwise we could find, for example, $\tilde{B}_\Delta \subset B_\Delta$ such that $(G \times H)_{(\tilde{B}_\Delta \cup B_\Lambda)}=1$, against the minimality of $B$. In conclusion, we have the following lemma.
	\begin{lemma}
		\label{lemmaSumBase}
		Let $G$ and $H$ be permutation groups on $\Delta$ and $\Lambda$ respectively. Suppose that $\Delta$ and $\Lambda$ are disjoint and consider the action of $G \times H$ on $\Delta \cup \Lambda$. Then
		\[
			\mathcal{M}(G \times H,\Delta\cup \Lambda) = \{a+b \, | \, a \in \mathcal{M}(G,\Delta), \, b \in \mathcal{M}(H,\Lambda)\}.
		\]
	\end{lemma}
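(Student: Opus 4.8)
The plan is to reduce the computation of $\mathcal{M}(G\times H,\Delta\cup\Lambda)$ to the two factors, by showing that the minimal bases of the product are exactly the disjoint unions of a minimal base of $G$ with a minimal base of $H$. First I would fix an arbitrary subset $B\subseteq\Delta\cup\Lambda$ and write $B_\Delta=B\cap\Delta$ and $B_\Lambda=B\cap\Lambda$, so that $B=B_\Delta\cup B_\Lambda$ is a disjoint union. From the componentwise description of the action of $G\times H$ on $\Delta\cup\Lambda$, an element $(g,h)$ fixes every point of $B$ precisely when $g$ fixes every point of $B_\Delta$ and $h$ fixes every point of $B_\Lambda$; hence
\[
	(G\times H)_{(B)}=G_{(B_\Delta)}\times H_{(B_\Lambda)}.
\]
In particular $B$ is a base for $G\times H$ if and only if $B_\Delta$ is a base for $G$ on $\Delta$ and $B_\Lambda$ is a base for $H$ on $\Lambda$.

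Next I would transfer minimality along this correspondence in both directions. If $B_\Delta$ and $B_\Lambda$ are minimal bases for $G$ and $H$, then deleting any point of $B=B_\Delta\cup B_\Lambda$ removes a point from exactly one of $B_\Delta,B_\Lambda$, destroys the base property of the corresponding factor, and hence, by the displayed identity, destroys the base property of $B$; so $B$ is a minimal base of $G\times H$. Conversely, if $B$ is a minimal base of $G\times H$, then $B_\Delta$ and $B_\Lambda$ are bases of $G$ and $H$ by the previous paragraph, and each is minimal: were some $\tilde B_\Delta\subsetneq B_\Delta$ already a base of $G$, then $\tilde B_\Delta\cup B_\Lambda$ would be a base of $G\times H$ properly contained in $B$, contradicting minimality, and symmetrically for $B_\Lambda$. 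Thus $(B_\Delta,B_\Lambda)\mapsto B_\Delta\cup B_\Lambda$ is a bijection from pairs consisting of a minimal base of $G$ and a minimal base of $H$ onto the set of minimal bases of $G\times H$.

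Finally, since $\Delta$ and $\Lambda$ are disjoint, $|B_\Delta\cup B_\Lambda|=|B_\Delta|+|B_\Lambda|$, so passing to cardinalities through the bijection yields $\mathcal{M}(G\times H,\Delta\cup\Lambda)=\{a+b\mid a\in\mathcal{M}(G,\Delta),\,b\in\mathcal{M}(H,\Lambda)\}$, as claimed. I do not expect a genuine obstacle here: the one step deserving a line of justification is the splitting of the pointwise stabilizer as the direct product $G_{(B_\Delta)}\times H_{(B_\Lambda)}$, which is immediate from the fact that $G$ acts trivially on $\Lambda$ and $H$ acts trivially on $\Delta$; the only mild subtlety is the degenerate case in which $G$ or $H$ is trivial, where the relevant part of $B$ is empty, consistently with the convention that the empty set is the unique minimal base of the trivial group.
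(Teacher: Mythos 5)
Your argument is correct and is essentially the same as the paper's: the paper likewise splits a base $B$ as $B_\Delta\cup B_\Lambda$, uses the factorization $(G\times H)_{(B)}=G_{(B_\Delta)}\times H_{(B_\Lambda)}$, and transfers minimality in both directions. The only difference is your explicit remark on the degenerate case of a trivial factor, which the paper leaves implicit.
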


\begin{proof}[Proof of Theorem~$\ref{thrm:2}$]Let $X = \{x_1,x_2,\dots,x_n\}\subseteq N$, with $x_1 < x_2 < \dots < x_n$. \\
	Firstly, suppose that $x_1=1$, and fix a prime number $p$. Define 
	\[
		G_1 = \langle g_{1,1},\dots,g_{1,x_n} \rangle
	\]
	to be a regular elementary abelian group of order $|G_1| = p^{x_n}$ with domain $\Delta_1$. 
	Moreover, for each $j=2,\dots,n-1$, define
	\[
		G_j = \langle 	g_{j,1},\dots,g_{j,x_n-x_{n-j+1}+1} \rangle
	\]
	to be a regular elementary abelian group of order $ |G_j| = p^{x_n-x_{n-j+1}+1}$ with domain $\Delta_j$. Suppose that $\Delta_j \cap \Delta_i = \emptyset$ for $i \neq j$. 
	Additionally, for $i> x_n-x_{n-j+1}+1$, define $g_{j,i}=1$.

	Finally, let $g_{n,1},\dots, g_{n,x_n}$ be cycles of length $p$ with pair-wise disjoint supports $\Delta_{n,1}, \dots, \Delta_{n,x_n}$, which in turn are also disjoint from $\Delta_i$, for $i=1,\dots,n-1$.
	\\
	Consider now the group
	\begin{align*}
		G = \langle &g_{1,1} g_{2,1} \cdots g_{n,1},\,\,g_{1,2} g_{2,2} \cdots g_{n,2},\,\, g_{1,3} g_{2,3} \cdots g_{n,3},\,\, \dots\,\, g_{1,x_n}g_{2,x_n} \cdots g_{n,x_n} \rangle
	\end{align*}
	acting on $\Omega=\Delta_1 \cup \dots \cup \Delta_{n-1} \cup \Delta_{n,1} \cup \dots \cup \Delta_{n,x_n}$.
	We claim that $\mathcal{M}(G,\Omega) = X$.

	Let $i \in {1,\dots,n-1}$ and let $\delta \in \Delta_i$. Observe that, by construction, $G_i$ acts regularly on $\Delta_i$, and hence the stabilizer of $\delta$ in $G$ fixes pointwise $\Delta_i$. Moreover, observe that the stabilizer of $\delta$ in $G$ fixes all the points in the orbits $\Delta_{2},\dots, \Delta_{i}$. Indeed, the number of generators of $G_i$ is greater than the number of generators of $G_j$ for $1< j<i$. Thus, the generators of $G$ which involve generators of $G_i$, also involve the ones of $G_j$. This implies that,  whenever we stabilize $\delta$, we also stabilize the points in $\Delta_2,\dots, \Delta_{i}$. Moreover, note that the points of the orbits $\Delta_{n,1},\dots,\Delta_{n,x_n-x_{n-i+1}+1}$ also are fixed by the stabilizer of $\delta$.
	\\
	Thus, in order to construct a minimal base for $G$, if we take a point from $\Delta_i$, we are forced to choose the other points in the orbits $\Delta_{n,x_n-x_{n-i+1}+2},\dots,\Delta_{n,x_n}$.
	
	We are now ready to construct the minimal bases of $G$. Firstly, $G$ has a base of cardinality $1$, given by an arbitrary point in $\Delta_1$. Indeed, the generators of $G_1$ appear in every generator of $G$.\\ 
	Let now $i < n$, $y \in \Delta_{n-i+1}$, and $y_j \in \Delta_{n,j}$, for $j=x_n-x_i+2,\dots,x_n$. We claim that $$B=\{y,y_{x_n-x_i+2},\dots,y_{x_n}\}$$ is a minimal base of cardinality $x_i$. By the argument in the previous paragraph, the only points that are moved by the stabilizer of $y$ are the ones in the orbits $\Delta_{n,x_n-x_i+2}, \dots, \Delta_{n,x_n}$. So by fixing a point in each of these orbits, we get the identity subgroup. Moreover, by noting that  $$g_{n-i+1,1}\cdots g_{n-i+1,n} \in G_{y_{x_n-x_i+2},\dots,y_{x_n}}$$ and $$g_{1,y_j} \cdots g_{n,y_j} \in G_{y,y_{x_n-x_i+2},\dots,y_{j-1},y_{j+1},\dots,y_{x_n}},$$ we conclude that $B$ is a minimal base.\\ Finally, a base of cardinality $x_n$ is given by $B = \{ y_1, \dots, y_{x_n}\}$, where $y_{j} \in \Delta_{n,j}$.  So far, we have proved that $X \subseteq \mathcal{M}(G,\Omega)$. 
	
	Let now $B$ be a minimal base for $G$ with $1 < |B| < x_n$. Then $B$ must contain a point from one of the orbits $\Delta_2,\dots, \Delta_{n-1}$. If not, either it has a point from $\Delta_1$  or it has points only from $\Delta_{n,1},\dots,\Delta_{n,x_n}$. In the former case, the stabilizer of the point in $\Delta_1$ would be the identity, so it is not possible. In the latter, since $|B| < x_n$, there is an orbit $\Delta_j$ which does not contain points of $B$, and so $G_{(B)} \neq 1$. From the remark above, $B$ is a base of the same form of the ones we have constructed in the previous paragraph. In conclusion, $\mathcal{M}(G,\Omega) = X$.
	
	Suppose now $x_1 > 1$, and take $Y=\{1, x_2-x_1+1, \dots, x_n-x_1+1\}$. Then, from the first part of the proof, there exists a permutation group $H$, acting on $\Lambda$, such that $\mathcal{M}(H,\Lambda) = Y$. Let $\mathrm{Sym}(x_1)$ be the symmetric group on $x_1$ symbols acting on $\{1,\dots,x_1\}$, and consider the group
	\[
	G = \mathrm{Sym}(x_1) \times H
	\]
	acting on the disjoint union $\Omega$ of $\{1,\dots, x_1\}$ and $\Lambda$. By Lemma~\ref{lemmaSumBase},
		$$\mathcal{M}(G,\Omega) =\{a+b\mid a\in \mathcal{M}(\mathrm{Sym}(x_1),\{1,\ldots,x_1\}), b\in \mathcal{M}(H,\Lambda)\}=\{x_1-1+b\mid b\in Y\}=X.\qedhere$$
	\end{proof}
	
	\section{Theorem~$\ref{thrm:3}$} \label{SectionThmMinBase}
	Our proof of Theorem \ref{thrm:3} is based on the product action of two (or more) permutation groups. 
	Let $G, H$ be two groups acting on $\Delta$ and $\Lambda$ respectively. Given $(g,h) \in G \times H$ and $(\delta, \lambda) \in \Delta \times \Lambda$, we define
	\[
		(\delta,\lambda)^{(g,h)} = (\delta^g, \lambda^h).
	\]
	It is straightforward to see that this defines an action of $G \times H$ on $\Delta \times \Lambda$. We refer to this action as the product action. Moreover, if $G$ and $H$ act transitively on $\Delta$ and $\Lambda$ respectively, then so does $G\times H$ on $\Delta \times \Lambda$.

Irredundant bases for this particular type of action have been previously examined in~\cite{GLSp}. In this context, as part of our effort to establish Theorem~\ref{thrm:3}, we present a result concerning minimal bases in the product action, a topic that in our opinion carries its own significance.

	Let $G$ be a permutation group on $\Omega$, and let  $\Lambda$ be a subset of $\Omega$. We say that $\Lambda$ is an indipendent set if its pointwise stabilizer is not equal to the pointwise stabilizer of any proper subset. The height of $G$, $H(G,\Omega)$, is defined to be the maximum size of an indipendent set of $G$. Note that a minimal base is in particular an indipendent set. In \cite[Lemma 2.5]{GLSp}, the authors prove that for the product action of two permutation groups $G$ with domain $\Delta$ and $H$ with domain $\Lambda$, 
	\[
		H(G \times H,\Delta\times\Lambda) \leq H(G,\Delta) + H(H,\Lambda).
	\]
	In particular, we have the following lemma.
	\begin{lemma}
		Let $G,H$ acting on $\Delta$, $\Lambda$ respectively. Then,
			$B(G\times H,\Delta\times\Lambda) \leq B(G,\Delta) + B(H,\Lambda).$
	\end{lemma}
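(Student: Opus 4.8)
The plan is to work directly with an arbitrary minimal base $B$ of $G\times H$ in the product action and to bound $|B|$ above by $B(G,\Delta)+B(H,\Lambda)$; taking $B$ of maximal size then gives the lemma. The starting point is the elementary identity $(G\times H)_{(S)}=G_{(\pi_\Delta(S))}\times H_{(\pi_\Lambda(S))}$ for every $S\subseteq\Delta\times\Lambda$, where $\pi_\Delta$ and $\pi_\Lambda$ denote the two coordinate projections. Applying it with $S=B$ shows that $B_\Delta:=\pi_\Delta(B)$ and $B_\Lambda:=\pi_\Lambda(B)$ are bases (not necessarily minimal) for $G$ on $\Delta$ and $H$ on $\Lambda$ respectively.

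Next I would single out the points that are \emph{essential} for these projected bases: call $\delta\in B_\Delta$ essential if $B_\Delta\setminus\{\delta\}$ is no longer a base for $G$, and let $E_\Delta$ be the set of such points; define $E_\Lambda\subseteq B_\Lambda$ analogously. Two claims then close the argument. The first is that $|B|\le |E_\Delta|+|E_\Lambda|$: for each $(\delta,\lambda)\in B$, minimality of $B$ forces $(G\times H)_{(B\setminus\{(\delta,\lambda)\})}\ne 1$, so by the identity above either $G_{(\pi_\Delta(B\setminus\{(\delta,\lambda)\}))}\ne 1$ or $H_{(\pi_\Lambda(B\setminus\{(\delta,\lambda)\}))}\ne 1$. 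In the first case $\pi_\Delta(B\setminus\{(\delta,\lambda)\})$ must be a \emph{proper} subset of $B_\Delta$ (otherwise the stabilizer would be $G_{(B_\Delta)}=1$), and since deleting one pair can destroy at most the first coordinate $\delta$, this forces $\pi_\Delta(B\setminus\{(\delta,\lambda)\})=B_\Delta\setminus\{\delta\}$; hence $\delta\in E_\Delta$ \emph{and} $(\delta,\lambda)$ is the unique element of $B$ with first coordinate $\delta$. The second case is symmetric, giving $\lambda\in E_\Lambda$ with $(\delta,\lambda)$ the unique element of $B$ over $\lambda$. This lets me define $f\colon B\to E_\Delta\sqcup E_\Lambda$ sending a pair to such a first coordinate in the first case and to such a second coordinate in the second case, and the uniqueness statements make $f$ injective. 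The second claim is that $|E_\Delta|\le B(G,\Delta)$: since $B_\Delta$ is a base for $G$ it contains a minimal base $C_\Delta$, and every essential point must lie in $C_\Delta$ (otherwise $C_\Delta\subseteq B_\Delta\setminus\{\delta\}$ would still be a base), so $|E_\Delta|\le |C_\Delta|\le B(G,\Delta)$; likewise $|E_\Lambda|\le B(H,\Lambda)$. Combining the two claims gives $|B|\le B(G,\Delta)+B(H,\Lambda)$.

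The only genuinely delicate point is the first claim, and specifically the coupling it exploits: a nontrivial stabilizer after deleting $(\delta,\lambda)$ on, say, the $\Delta$-side forces \emph{both} that $\delta$ is essential \emph{and} that $(\delta,\lambda)$ is the only element of $B$ lying over $\delta$ (in graph-theoretic terms, an edge of the natural bipartite graph on $B_\Delta\sqcup B_\Lambda$ with edge set $B$ whose removal spoils the base must dangle from an essential leaf). It is this coupling that upgrades the counting map from merely well defined to injective; everything else is bookkeeping. I note in passing that one cannot simply combine the height inequality $H(G\times H,\Delta\times\Lambda)\le H(G,\Delta)+H(H,\Lambda)$ of \cite[Lemma~2.5]{GLSp} with the fact that a minimal base is an independent set, since that route yields only the weaker bound $B(G\times H,\Delta\times\Lambda)\le H(G,\Delta)+H(H,\Lambda)$; the work above is precisely in replacing the heights on the right-hand side by the possibly smaller quantities $B(G,\Delta)$ and $B(H,\Lambda)$.
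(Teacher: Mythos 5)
Your argument is correct, but it does not follow the route the paper itself takes for this lemma: there the bound is presented as an immediate consequence of the height inequality $H(G\times H,\Delta\times\Lambda)\le H(G,\Delta)+H(H,\Lambda)$ of \cite[Lemma 2.5]{GLSp} together with the fact that a minimal base is an independent set. As you observe in your closing remark, that derivation literally yields only $B(G\times H,\Delta\times\Lambda)\le H(G,\Delta)+H(H,\Lambda)$, which is weaker whenever the heights exceed the maximal sizes of minimal bases; the inequality with $B(G,\Delta)+B(H,\Lambda)$ on the right is really established later, in parts \eqref{item1} and \eqref{item2} of Lemma~\ref{lemmaVect}, and that is indeed what the paper cites when it uses the bound in the proof of Theorem~\ref{thrm:41}. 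Your direct argument is in substance that lemma: your set $E_\Delta$ of essential points of the projected base plays the role of the indices with $v^G_i=1$, your injection $f$ encodes the statement that no index has $v^G_i=v^H_i=0$ (part \eqref{item2}), and your bound $|E_\Delta|\le B(G,\Delta)$ is part \eqref{item1}. One point where your write-up is actually tighter than the paper's: for part \eqref{item1} the paper asserts that $\Gamma=\{\delta_i : v^G_i=1\}$ is a \emph{minimal base} of $G$, but only verifies that no point of $\Gamma$ is redundant, not that $G_{(\Gamma)}=1$; your alternative --- $B_\Delta$ contains some minimal base $C_\Delta$, and every essential point must lie in $C_\Delta$ --- sidesteps that issue entirely and proves the same inequality without needing $\Gamma$ to be a base. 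So the proposal is correct, amounts to a self-contained version of Lemma~\ref{lemmaVect}\eqref{item1}--\eqref{item2} rather than the one-line deduction from heights, and handles one step more carefully than the paper does.
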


	To analyze the minimal bases of $G \times H$ in its action on $\Delta\times \Lambda$, we use the following construction.
	Let $\{(\delta_1,\lambda_1), \dots , (\delta_k, \lambda_k)\}$ be a minimal base for the product action of $G \times H$. We define the $\{0,1\}$-vectors $v^G = (v^G_1,\dots, v^G_k)$ and $v^H = (v^H_1,\dots,v^H_k)$ where
	\[
	 v^G_i =
		\begin{cases}
			1  \text{ if } \exists g \in G \, : \, \delta_i^g \neq \delta_i, \delta_j^g = \delta_j \, \forall j \neq i ,\\
			0 \text{ otherwise,}\\
		\end{cases}
	\]
	and
	\[
	v^H_i =
	\begin{cases}
		1  \text{ if } \exists h \in H \, : \, \lambda_i^h \neq \lambda_i, \lambda_j^h = \lambda_j \, \forall j \neq i ,\\
		0 \text{ otherwise.}\\
	\end{cases}
	\]
	Roughly speaking, these vectors measure how many elements of $\{\delta_1,\dots,\delta_k\}$ and $\{\lambda_1,\dots,\lambda_k\}$ are necessary to form a minimal base for $G$ and $H$ respectively. Observe that $v^G$ and $v^H$ depend on the group $G\times H$ in its action on $\Delta\times\Lambda$ and also on the minimal base $\{(\delta_1,\lambda_1), \dots , (\delta_k, \lambda_k)\}$: for not making the notation too cumbersome, we omit the dependency on the base for denoting $v^G$ and $v^H$.

	Finally, for $X \in \{G,H\}$, we let
	\[
		n_X = |\{i \in \{1,\dots,k\} \, | \, v^X_i = 1 \}|.
	\]
	So $n_X$ is the number of $1$s appearing in the vector associated to $X$.
	\begin{remark}{\rm If $v_i^G = 0$ and $g \in G$, then two things can happen: either $\delta_i^g = \delta_i$, or there exists $j \in \{1,\dots,k\}$, $j \neq i$, such that $\delta_j^g \neq \delta_j$.}
	\end{remark}
	
	\begin{lemma}
		\label{lemmaVect}
		Let $G,H$ be acting on $\Delta$, $\Lambda$ respectively and let  $\{(\delta_1,\lambda_1), \dots , (\delta_k, \lambda_k)\}$ be a minimal base for the product action of $G \times H$. Then, the following hold.
		\begin{enumerate}
			\item\label{item1} $n_G \leq B(G,\Delta)$ and $n_H \leq B(H,\Lambda)$.
			\item\label{item2} There exists no $j\in \{1,\ldots,k\}$ with $v_j^G=v_j^H=0$. In particular, $$n_G + n_H \geq k.$$
			\item\label{item3} If $v^G_i = v^G_j = 1$, then $\delta_i \neq \delta_j$. Similarly, if $v^H_i = v^H_j = 1$, then $\lambda_i \neq \lambda_j$.
		\end{enumerate}
		Additionally, if $k=B(G,\Delta)+B(H,\Lambda)$, then the following hold.
		\begin{enumerate}
			 \setcounter{enumi}{3}
			\item\label{item4} $n_G = B(G,\Delta)$ and $n_H = B(H,\Lambda)$.
			
			\item\label{item5} For all $i=1,\dots,k$, $v^G_i \neq v^H_i$.
		\end{enumerate}		
	\end{lemma}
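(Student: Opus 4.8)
The plan is to establish items (1)--(5) essentially in the order stated, since the later items build on the earlier ones. For item (1), the key observation is that the set $\{\delta_i : v_i^G = 1\}$ is an independent set for $G$ on $\Delta$: by item (3) these $\delta_i$ are distinct, and each one is ``witnessed'' by an element $g \in G$ that moves $\delta_i$ but fixes every other $\delta_j$, so dropping $\delta_i$ strictly enlarges the pointwise stabilizer within this set. Actually, to get the bound $n_G \le B(G,\Delta)$ I want a \emph{minimal} base, not just an independent set; so I would argue that $\{\delta_i : v_i^G = 1\}$, together with its pointwise stabilizer being handled correctly, can be extended/refined to a minimal base of $G$ of size at least $n_G$ — or more directly, observe that this set is independent and every independent set extends to, or rather is contained in no, hmm. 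The cleanest route: an independent set of size $m$ forces $B(G,\Delta) \ge m$ because one can greedily complete it to a minimal base (adding points only increases the ``committed'' stabilizer chain). So $n_G \le B(G,\Delta)$, and symmetrically for $H$.

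For item (2), suppose $v_j^G = v_j^H = 0$ for some $j$. I claim $\{(\delta_i,\lambda_i) : i \ne j\}$ is already a base for $G \times H$, contradicting minimality. Take $(g,h)$ fixing all $(\delta_i,\lambda_i)$ with $i \ne j$; I must show it fixes $(\delta_j,\lambda_j)$. Since $v_j^G = 0$, the element $g$ fixes $\delta_i$ for all $i \ne j$ but $v_j^G=0$ means there is \emph{no} element of $G$ moving only $\delta_j$ among the $\delta$'s — so any $g$ fixing all $\delta_i$, $i \ne j$, must also fix $\delta_j$ (if it moved $\delta_j$ it would be such a witness). Hence $g$ fixes $\delta_j$; symmetrically $h$ fixes $\lambda_j$; so $(g,h)$ fixes $(\delta_j,\lambda_j)$. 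This shows the deletion is still a base, contradiction. The ``in particular'' $n_G + n_H \ge k$ is then immediate: every index $j$ has $v_j^G = 1$ or $v_j^H = 1$. Item (3) I have essentially already used: if $v_i^G = v_j^G = 1$ with $i \ne j$ but $\delta_i = \delta_j$, the witness $g$ for $i$ moves $\delta_i = \delta_j$ while being required to fix $\delta_j$, a contradiction; symmetric for $H$.

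For the final two items, assume $k = B(G,\Delta) + B(H,\Lambda)$. Combining item (1), $n_G \le B(G,\Delta)$ and $n_H \le B(H,\Lambda)$, with item (2), $n_G + n_H \ge k = B(G,\Delta) + B(H,\Lambda)$, forces equality in both, giving item (4). For item (5): item (2) says $v_i^G$ and $v_i^H$ are not both $0$, so it suffices to rule out $v_i^G = v_i^H = 1$ for some $i$. If that happened, then the number of indices $j$ with $v_j^G = 1$ or $v_j^H = 1$ would be at most $n_G + n_H - 1 = k - 1 < k$, contradicting item (2), which requires this count to be exactly $k$. Hence $v_i^G \ne v_i^H$ for every $i$.

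I expect item (1) to be the main obstacle — not conceptually hard, but it requires care to pass correctly between ``independent set'', ``minimal base'', and the definition of $B(\cdot,\cdot)$ as the maximum size of a \emph{minimal} base. The subtlety is that an independent set need not itself be contained in a minimal base of the same or larger size unless one checks that the greedy completion process (repeatedly adjoining a point that strictly shrinks the stabilizer until reaching the identity, then this is a base, then pruning it back to minimality cannot remove any of the original witnessed points) genuinely produces a minimal base of size $\ge n_G$. Once that bookkeeping is done, items (2)--(5) are short combinatorial deductions of the kind sketched above. I would state a small auxiliary fact (every independent set of size $m$ forces $B(G,\Omega) \ge m$) and either cite it from the independence/height literature already referenced via \cite{GLSp} or give the two-line greedy argument inline.
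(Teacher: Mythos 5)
Your treatment of items (2)--(5) is correct and coincides with the paper's proof: the deletion argument for (2), the witness contradiction for (3), the combination of (1) and (2) for (4), and the counting argument for (5) are exactly the steps the authors take.

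The genuine gap is in item (1). You are right that $\Gamma=\{\delta_i : v^G_i=1\}$ is a priori only an independent set, but the auxiliary fact you want to invoke --- that an independent set of size $m$ forces $B(G,\Delta)\ge m$ --- is not delivered by your greedy sketch, and you should not assume it: it would assert that the height $H(G,\Delta)$ never exceeds $B(G,\Delta)$, which is not available in \cite{GLSp}. The failure point is exactly the one you flag yourself: the witness certifying that a point $\delta_i$ of the independent set is non-redundant fixes only the \emph{other points of the independent set}, not the points adjoined during the completion, so after completing and pruning that witness no longer prevents $\delta_i$ from being discarded. The repair uses the extra strength you actually have here: each witness $g_i$ for $v^G_i=1$ fixes \emph{all} of $\delta_1,\dots,\delta_k$ except $\delta_i$. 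So instead of completing $\Gamma$ upwards, prune the full projection $\{\delta_1,\dots,\delta_k\}$ (which is a base for $G$, since $(G\times H)_{(B)}=G_{(\{\delta_i\})}\times H_{(\{\lambda_i\})}=1$) downwards to a minimal base: at every stage the current set $D$ contains $\Gamma$ and satisfies $g_i\in G_{(D\setminus\{\delta_i\})}\setminus\{1\}$, so no $\delta_i$ with $v^G_i=1$ is ever removable, and the resulting minimal base contains $\Gamma$; by item (3) these $\delta_i$ are pairwise distinct, whence $n_G\le B(G,\Delta)$. For what it is worth, the paper's own proof of (1) is also too quick: it asserts that $\Gamma$ itself is a minimal base of $G$, which presupposes $G_{(\Gamma)}=1$, and this can fail. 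For instance, with $G=H=S_4$ acting naturally on $\{1,2,3,4\}$, the set $\{(1,1),(2,1),(3,2),(3,3)\}$ is a minimal base for the product action with $v^G=(1,1,0,0)$, yet $\Gamma=\{1,2\}$ is not a base of $S_4$ (its pointwise stabilizer is $\langle(3\,4)\rangle$); the pruning argument above gives $n_G=2\le 3=B(S_4,\{1,2,3,4\})$ and proves (1) correctly in all cases.
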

	\begin{proof}
	We prove~\eqref{item1}.	 Take $\Gamma = \{\delta_{i_1},\dots,\delta_{i_{n_G}}\}$ to be the subset of $\{\delta_1, \dots, \delta_k\}$  for which $v^G_i=1$. For each $\delta \in \Gamma$, there exists $g\in G \setminus \{1\}$ which moves $\delta$ while fixing all the other points, so that $g \in G_{(\Gamma \setminus \{\delta\})}\setminus \{1\}$. This implies that $\Gamma$ is a minimal base for $G$ of cardinality $n_G$. Therefore, $n_G \leq B(G,\Delta)$. The proof for $H$ is analogous. 
	
  We prove~\eqref{item2}.			 Suppose there exists $j \in \{1,\dots,k\}$ such that $v^G_j = v^H_j = 0$. Consider the set
			\[
				\tilde{B}=\{(\delta_1,\lambda_1),\dots,(\delta_{j-1},\lambda_{j-1}),(\delta_{j+1},\lambda_{j+1}),\dots,(\delta_k,\lambda_k)\}.
			\]
			Let $\Gamma = \{\delta_1,\dots,\delta_k\}$. Clearly, $G_{(\Gamma)} = 1$. We claim that $G_{(\Gamma\setminus \{\delta_j\})} = 1$. Let $g \in G_{(\Gamma\setminus \{\delta_j\})}$. Since $v^G_j = 0$ and  $g$ does not fix $\delta_j$, there exists $i\neq j$ such that $\delta_i^g \neq \delta_i$. This is impossible, because $g \in G_{(\Gamma \setminus \{\delta_j\})}$. Thus, $g$ must fix $\delta_j$, so  $g=1$. The same happens for $H$, and this implies that $\tilde{B}$ is a base for $G \times H$ in its product action on $\Delta\times \Lambda$, contradicting the minimality of $B$.
			
			We prove~\eqref{item3}. If $\delta_i=\delta_j$, then there is no $g$ which fixes $\delta_i$ and moves all the other points, as it would also fix  $\delta_j$. So $v^G_i = v^G_j = 0$.
			
			Part~\eqref{item4} follows from~\eqref{item1} and~\eqref{item2}.  
			
			We prove~\eqref{item5}.  Suppose that there exists $i$ such that $v^G_i = v^H_i$. From~\eqref{item2}, they are both equal to $1$. Since $k = B(G,\Delta) + B(H,\Lambda) = n_G + n_H$, there exists an index $j$ such that $v^G_j = v^H_j = 0$, but this contradicts~\eqref{item2}.
	\end{proof}

	From Lemma~\ref{lemmaVect}, when $B(G\times H , \Omega \times \Lambda) = B(G,\Omega) + B(H,\Lambda)$, the vectors $v^G$ and $v^H$ associated to the minimal base of maximal cardinality have $B(G,\Omega)$ and $B(H,\Lambda)$ coordinates equal to $1$ respectively, moreover, $v^G_i+v^H_i =1$ $\forall i$.
	\begin{remark}\label{remark}{\rm
		If $\{(\delta_1,\lambda_1),\dots,(\delta_k,\lambda_k)\}$ is a base for the product action, then $\{\delta_1,\dots,\delta_k\}$ is a base for $G$. Moreover, if we remove from $\{\delta_1,\ldots,\delta_k\}$ one $\delta_i$ with $v^G_i=0$, we obtain another base for $G$. Furthermore, if we remove from $\{\delta_1,\ldots,\delta_k\}$ all $\delta_i$s with $v^G_i=0$, we obtain a minimal base for $G$.}
	\end{remark}
	\begin{theorem}\label{thrm:41}
		Let $G,H$ be two permutation groups acting on $\Delta$, $\Lambda$ respectively. Then, the set $\mathcal{M}(G\times H,\Delta\times\Lambda)$ forms an interval of natural numbers, and
		\[
			\mathcal{M}(G \times H,\Delta\times\Lambda) = \{\max(b(G,\Delta),b(H,\Lambda)), \dots, B(G,\Delta)+B(H,\Lambda) - \varepsilon\},
		\]
		where $\varepsilon \in \{0,1,2\}$.
	\end{theorem}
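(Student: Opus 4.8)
The plan is to prove four statements, which together give the displayed formula: (a) every $\ell\in\mathcal M(G\times H,\Delta\times\Lambda)$ satisfies $\max(b(G,\Delta),b(H,\Lambda))\le\ell\le B(G,\Delta)+B(H,\Lambda)$; (b) the value $\max(b(G,\Delta),b(H,\Lambda))$ is attained; (c) the value $B(G,\Delta)+B(H,\Lambda)-2$ is attained (so that, with (a), the largest element is $B(G,\Delta)+B(H,\Lambda)-\varepsilon$ with $\varepsilon\in\{0,1,2\}$); and (d) $\mathcal M(G\times H,\Delta\times\Lambda)$ has no gaps. For (a), if $\{(\delta_1,\lambda_1),\dots,(\delta_\ell,\lambda_\ell)\}$ is a base of the product action then an element $(g,h)$ fixing all these pairs has $g$ fixing every $\delta_i$ and $h$ fixing every $\lambda_i$, so $\{\delta_1,\dots,\delta_\ell\}$ is a base for $G$ and $\{\lambda_1,\dots,\lambda_\ell\}$ is a base for $H$, giving $\ell\ge b(G,\Delta)$ and $\ell\ge b(H,\Lambda)$; the upper bound is the inequality $B(G\times H,\Delta\times\Lambda)\le B(G,\Delta)+B(H,\Lambda)$ recorded above. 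For (b), assume without loss $b(G,\Delta)\ge b(H,\Lambda)$, set $a=b(G,\Delta)$, $c=b(H,\Lambda)$, fix minimal bases $\{\delta_1,\dots,\delta_a\}$ of $G$ and $\{\lambda_1,\dots,\lambda_c\}$ of $H$ and any $\mu\in\Lambda$; then $\{(\delta_1,\lambda_1),\dots,(\delta_c,\lambda_c),(\delta_{c+1},\mu),\dots,(\delta_a,\mu)\}$ is a base (its set of first coordinates is a minimal base of $G$, its set of second coordinates contains $\{\lambda_1,\dots,\lambda_c\}$), and it is minimal because deleting any pair makes the first coordinates a proper subset of $\{\delta_1,\dots,\delta_a\}$, with a witness of the form $(g,1)$.

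For (c), assume without loss $B(G,\Delta)\ge B(H,\Lambda)=:b$, put $a=B(G,\Delta)$, and suppose first $b\ge2$. Fix maximal minimal bases $\{\delta_1,\dots,\delta_a\}$ of $G$ and $\{\lambda_1,\dots,\lambda_b\}$ of $H$, and for $0\le r\le b-2$ consider
\[
\mathcal C_r=\{(\delta_1,\lambda_j):2\le j\le b-r\}\cup\{(\delta_i,\lambda_1):2\le i\le a-r\}\cup\{(\delta_{a-t+1},\lambda_{b-t+1}):1\le t\le r\},
\]
a set of $a+b-2-r$ distinct pairs whose first coordinates are exactly $\delta_1,\dots,\delta_a$ and whose second coordinates are exactly $\lambda_1,\dots,\lambda_b$; hence $\mathcal C_r$ is a base, and deleting any one of its pairs removes from the occurring coordinates some $\delta_i$ with $i\ge2$ or some $\lambda_j$ with $j\ge2$, leaving a proper subset of a maximal minimal base, so $\mathcal C_r$ is minimal. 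Letting $r$ run over $\{0,\dots,b-2\}$ gives $\{a,a+1,\dots,a+b-2\}\subseteq\mathcal M(G\times H,\Delta\times\Lambda)$, so $B(G,\Delta)+B(H,\Lambda)-2$ is attained; if instead $b\le1$ then $B(H,\Lambda)=b(H,\Lambda)$, and (taking $\{\lambda_1\}$ a minimal base of $H$ when $b=1$) the set $\{(\delta_1,\lambda_1),\dots,(\delta_a,\lambda_1)\}$ is minimal of size $a=B(G,\Delta)+B(H,\Lambda)-1$. Running $\mathcal C_r$ over all choices of minimal bases of $G$ and of $H$ (of all admissible sizes) in fact produces every value in $[\max(b(G,\Delta),b(H,\Lambda)),B(G,\Delta)+B(H,\Lambda)-2]$, so the burden of (d) is to fill the few values between this interval and $B(G,\Delta)+B(H,\Lambda)-\varepsilon$.

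For (d) it suffices to show that $\ell\in\mathcal M(G\times H,\Delta\times\Lambda)$ and $\ell>\max(b(G,\Delta),b(H,\Lambda))$ force $\ell-1\in\mathcal M(G\times H,\Delta\times\Lambda)$. Fix such a minimal base $\mathcal B=\{(\delta_1,\lambda_1),\dots,(\delta_\ell,\lambda_\ell)\}$ with data $v^G,v^H,n_G,n_H$. Assume first $n_G<\ell$ and $n_H<\ell$; by Lemma~\ref{lemmaVect}\eqref{item2} the nonempty index sets $\{i:v^G_i=0\}$ and $\{i:v^H_i=0\}$ are disjoint, so we may pick $j$ with $v^G_j=0$ and $i\ne j$ with $v^H_i=0$, and minimality of $\mathcal B$ forces $\delta_i\ne\delta_j$, $\lambda_i\ne\lambda_j$ and $(\delta_i,\lambda_j)\notin\mathcal B$ (otherwise $\mathcal B\setminus\{(\delta_i,\lambda_i),(\delta_j,\lambda_j)\}$ would be a base, by Remark~\ref{remark}, properly contained in $\mathcal B$). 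Then
\[
\mathcal B'=\bigl(\mathcal B\setminus\{(\delta_i,\lambda_i),(\delta_j,\lambda_j)\}\bigr)\cup\{(\delta_i,\lambda_j)\}
\]
has size $\ell-1$, is a base because (Remark~\ref{remark}) its first coordinates $\{\delta_m:m\ne j\}$ and second coordinates $\{\lambda_m:m\ne i\}$ are bases, and is minimal: removing $(\delta_i,\lambda_j)$ leaves first coordinates $\{\delta_m:m\ne i,j\}$, a non-base since $v^G_i=1$ supplies a nontrivial element fixing $\{\delta_m:m\ne i\}$; and removing any $(\delta_m,\lambda_m)$ with $m\ne i,j$ leaves a non-base because a nontrivial element fixing all pairs of $\mathcal B$ except $(\delta_m,\lambda_m)$ also fixes $(\delta_i,\lambda_j)$. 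There remains the case $n_G=\ell$ (symmetrically $n_H=\ell$): then $\{\delta_1,\dots,\delta_\ell\}$ is a minimal base of $G$ of size $\ell>b(G,\Delta)$ while $\{\lambda_1,\dots,\lambda_\ell\}$ is only a base of $H$, so no two pairs of $\mathcal B$ can be merged; one must then assemble the required minimal base of the product directly from minimal bases of $G$ and of $H$ of smaller sizes (using that $H$ has bases of every size at least $b(H,\Lambda)$ and combining as in (b)--(c)), arranging that its length is exactly $\ell-1$.

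I expect (d) to be the main obstacle, and within it precisely the case $n_G=\ell$ or $n_H=\ell$: since no merge is available there, the short minimal base of the product has to be built from the factors, and controlling its length so that it lands on $\ell-1$ rather than overshooting is the delicate point. It is essentially this same bookkeeping, together with the mutual sizes of $b(G,\Delta)$, $B(G,\Delta)$, $b(H,\Lambda)$, $B(H,\Lambda)$, that governs which of the three values $\varepsilon=0,1,2$ actually occurs.
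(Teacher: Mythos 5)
Your steps (a), (b), (c), and the ``merge'' in the case $n_G<\ell$, $n_H<\ell$ of (d) are correct; that merge is in fact a clean generalization of the paper's own argument, which performs it only when $\ell=B(G,\Delta)+B(H,\Lambda)$, where Lemma~\ref{lemmaVect}\eqref{item5} forces $n_G,n_H<\ell$ automatically. The genuine gap sits exactly where you flagged it: the case $n_G=\ell$ (or $n_H=\ell$) of (d), together with the unproved assertion at the end of (c) that running $\mathcal C_r$ over all pairs of minimal bases of the factors fills all of $[\max(b(G,\Delta),b(H,\Lambda)),B(G,\Delta)+B(H,\Lambda)-2]$. What that sweep actually produces is $\bigcup_{a'\in\mathcal M(G,\Delta),\,b'\in\mathcal M(H,\Lambda)}[\max(a',b'),a'+b'-2]$, which can miss long stretches when $\mathcal M(G,\Delta)$ or $\mathcal M(H,\Lambda)$ is not an interval; and those missing values cannot be recovered in the $n_G=\ell$ case, because there the required minimal base of size $\ell-1$ need not exist at all.

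Indeed the statement you are asked to prove is false in this generality, so no completion of your case $n_G=\ell$ is possible. Let $H$ be regular of order $2$ on $\Lambda=\{1,2\}$. For any minimal base $\{(\delta_1,\lambda_1),\dots,(\delta_\ell,\lambda_\ell)\}$ of the product with $\ell\ge2$ we have $v^H_i=0$ for every $i$ (a nonidentity element of a regular group fixes no point, so it cannot move $\lambda_i$ while fixing some $\lambda_j$), hence $v^G_i=1$ for every $i$ by Lemma~\ref{lemmaVect}\eqref{item2}, hence $\{\delta_1,\dots,\delta_\ell\}$ is a minimal base of $G$ and $\ell\in\mathcal M(G,\Delta)$; conversely every minimal base of $G$ of size at least $2$, paired with arbitrary second coordinates, gives a minimal base of the product. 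So if $b(G,\Delta)\ge2$ then $\mathcal M(G\times H,\Delta\times\Lambda)=\mathcal M(G,\Delta)$. Taking for $G$ the transitive group of Proposition~\ref{number prop}, with $\mathcal M(G,\Delta)=\{3,n\}$ and $n\ge5$ (or any group from Theorem~\ref{thrm:2} whose $\mathcal M$ is not an interval), the left-hand side is $\{3,n\}$, not an interval. The published proof shares this flaw: its opening reduction to pairs of minimal bases of the factors is not sufficient. Your decomposition (a)--(d) is the right skeleton for a corrected statement in which $\mathcal M(G,\Delta)$ and $\mathcal M(H,\Lambda)$ are assumed to be intervals; under that hypothesis your troublesome case disappears, since $\ell\in\mathcal M(G,\Delta)$ and $\ell-1\ge b(G,\Delta)$ give $\ell-1\in\mathcal M(G,\Delta)$, and the construction of (b) applied to a minimal base of $G$ of size $\ell-1$ and one of $H$ of size $b(H,\Lambda)\le\ell-1$ yields the desired minimal base of the product of size $\ell-1$.
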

	\begin{proof}
We start by proving that, for each $i = \max(b(G,\Delta),b(H,\Lambda)), \dots , B(G,\Delta) + B(H,\Lambda) -2$, there exists a minimal base for $G\times H$ of cardinality $i$. To do this, it is sufficient to prove that, for each minimal base of $G$ of size $a$, and for each minimal base of $H$ of size $b$, there exists a minimal base of $G\times H$ size $k$ for each $k$ between $\max(a,b)$ and $a+b-2$. Without loss of generality, we suppose that $a \le b$.

Let $\{\delta_1,\dots,\delta_{a}\}$ be a minimal base of $G$ of cardinality $a$ and let $\{\lambda_1,\dots,\lambda_{b}\}$ be a minimal base of $H$ of cardinality $b$. We argue geometrically: we draw the elements of $\Delta\times \Lambda$ as a grid where the columns are labelled by the elements of $\Delta$ and the rows are labelled by the elements of $\Lambda$. Moreover, we order columns and rows  in such a way that the two bases of $G$ and $H$ are in the top left corner. To form a minimal base for $G\times H$, we need to choose points of the grid.
		
		Note that in order to ensure that our grid points form a minimal base for $G\times H$, we must avoid specific geometric configurations. Indeed, suppose that we choose $(\delta_1,\lambda_1)$ and $(\delta_2,\lambda_2)$, with $\delta_1\ne\delta_2$ and $\lambda_1\ne \lambda_2$, and consider the stabilizer in $G\times H$ of these two points. Take $(g,h)$ in this stabilizer. Then 
		\[
		(\delta_1,\lambda_2)^{(g,h)}= (\delta_1^g,\lambda_2^h) = (\delta_1,\lambda_2),
		\]
		and the same for $(\lambda_2,\delta_1)$. This shows that, if $(g,h)$ fixes $(\delta_1,\lambda_1)$ and $(\delta_2,\lambda_2)$, then $(g,h)$ fixes the four points of the square 
		 $(\delta_1,\lambda_1)$, $(\delta_1,\lambda_2)$, $(\delta_2,\lambda_1)$ and $(\delta_2,\lambda_2)$.
		 Therefore, in order to achieve a minimal base for $G\times H$, we must avoid creating triangles or squares within our grid. 
		 
		Let $k$ be between $\max(a,b)$ and $a+b-2$. To construct a base of size $k$, proceed in this way.
		\begin{itemize}
			\item Start from the point $(\delta_1,\lambda_1)$ and choose $a+b-k-1$ points in diagonal: 
			\[
			(\delta_1,\lambda_1), (\delta_2,\lambda_2), \dots, (\delta_{a+b-k-1},\lambda_{a+b-k-1}).
			\]
			\item Now take $b-1$ vertical points:
			\[
			(\delta_{a+b-k},\lambda_{a+b-k-1}),\dots,(\delta_{a-1},\lambda_{a+b-k-1}).
			\]
			\item Now take the point $(\delta_a,\lambda_{a+b-k})$, and then continue  vertically for $k-a$ steps:
			\[
			(\delta_a, \lambda_{a+b-k+1}),\dots, (\delta_a,\lambda_b).
			\]
		\end{itemize}
		In conclusion, the minimal base for $G\times H$ of cardinality $k$ is
		\begin{align*}
			&(\delta_1,\lambda_1), (\delta_2,\lambda_2), \dots, (\delta_{a+b-k-1},\lambda_{a+b-k-1}),\\
			&	(\delta_{a+b-k},\lambda_{a+b-k-1}),\dots,(\delta_{a-1},\lambda_{a+b-k-1}), \\
			&(\delta_a,\lambda_{a+b-k}),	(\delta_a, \lambda_{a+b-k+1}),\dots, (\delta_a,\lambda_b).
		\end{align*}
		This is a base, because it fills the two bases of $G$ and $H$, and it is minimal, because we have not drawn triangles in our grid.
		
		From Remark~\ref{remark}, we deduce
		$$b(G\times H,\Delta\times\Lambda)=\max(b(G,\Delta),b(G,\Lambda)).$$
Summing up, so far, we have shown that
$$\mathcal{M}(G\times H,\Delta\times \Lambda)\supseteq\{
\max(b(G,\Delta),b(H,\Lambda)),\ldots,B(G,\Delta)+B(H,\Lambda)-2\}.$$
Therefore, if $B(G\times H,\Delta\times \Lambda)=B(G,\Delta)+B(H,\Lambda)-2$, then the theorem follows immediately with $\varepsilon=2$. Similarly, if 	$B(G\times H,\Delta\times \Lambda)=B(G,\Delta)+B(H,\Lambda)-1$, then the theorem follows with $\varepsilon=1$. Since by Lemma~\ref{lemmaVect} part~\eqref{item2} $B(G\times H,\Delta\times \Lambda)\le B(G,\Delta)+B(H,\Lambda)$, the only case that requires special care is when $B(G\times H,\Delta\times \Lambda)=B(G,\Delta)+B(H,\Lambda)$. Indeed, in this latter case, we need to prove that there exists also a minimal base of cardinality $B(G,\Delta) + B(H,\Lambda)-1$. 

For, let 
		\[
				B = \{(\delta_1,\lambda_1),\dots,(\delta_k,\lambda_k)\}
		\]
		be a minimal base with $k= B(G,\Delta) + B(H,\Delta)$. By Lemma~\ref{lemmaVect}, there exists an index $i$ such that the vectors $v^G$ and $v^H$ satisfy $v^G_i =0,\, v^G_{i+1} = 1,\, v^H_i=1, v^H_{i+1} = 0$. Consider now the subset
		\[
				B' = \{(\delta_1,\lambda_1),\dots,(\delta_{i-1},\lambda_{i-1}),(\delta_{i+1},\lambda_{i}),(\delta_{i+2},\lambda_{i+2}),\dots,(\delta_k,\lambda_k)\}.
		\]
		We claim that this is a minimal base for the action of $G\times H$ on $\Delta\times \Lambda$. \\
		\textit{Base}: consider $\Xi = \{\delta_1,\dots, \delta_{i-1},\delta_{i+1},\dots,\delta_k\}$. Since $v^G_i = 0$, there exists a subset of $\Xi$ which is a base for $G$, so $\Xi$ is a base for $G$. The same happens for $\Xi = \{\lambda_1,\dots,\lambda_i,\lambda_{i+2},\dots,\lambda_k\}$. This shows that $B'$ is a base for $G\times H$.
		\\
		\textit{Minimal}: Consider $\tilde{B} = B' \setminus \{(\delta_j,\lambda_j)\}$, for $j=1,\dots,i-1,i+2,\dots,k$. Then, at least one of $v^G_j$ or $v^H_j$ is equal to $1$. If for example $v^G_j = 1$, then there exists $g \in G$  such that $\delta_j^g \neq \delta_j$ and $\delta_t^g = \delta_t$ for all $t \neq j,i$. In particular, $\tilde{B}$ is not a base. The same happens if we consider $\tilde{B} = B' \setminus \{(\delta_{i+1},\lambda_i)\}$.
	\end{proof}
	
	We are currently not able to determine necessary and sufficient conditions that establish the exact value of $\varepsilon$ in Theorem~\ref{thrm:41}, see Remark~\ref{monday} for a ``running conjecture''.
	Nevertheless, if $G$ and $H$ are symmetric groups of degree $n$ and $m$ respectively, then we can compute exactly $\mathcal{M}(G \times H,\{1,\dots,n\} \times \{1,\dots,m\})$, as we shall see now. This also gives us examples where $\varepsilon$ is $0,1,2$ respectively, proving that each possibility in Theorem~\ref{thrm:41} can indeed occur, see Example~\ref{example1}.
	
For the rest of the proof, we let $S_n$ be the symmetric group on $\{1,\ldots,n\}$ and, for not making the notation too cumbersome, we write $b(S_n)=B(S_n)=b(S_n,\{1,\ldots,n\})=B(S_n,\{1,\ldots,n\})$.	
	\begin{lemma}\label{ProdSym}
		Let $G$ be a non-identity permutation group acting on $\Delta$ and let $n>1$. Then
		\[
			B(G\times S_n,\Delta\times \{1,\dots,n\}) \leq B(G,\Delta) + B(S_n) -1.
		\]
	\end{lemma}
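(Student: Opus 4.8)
The plan is to feed this into Theorem~\ref{thrm:41}. Recall first that $B(S_n)=n-1$: a subset of $\{1,\dots,n\}$ is a base for $S_n$ precisely when it has at least $n-1$ elements, so all minimal bases of $S_n$ have size $n-1$. Since $n>1$, Theorem~\ref{thrm:41} applied to $G$ and $S_n$ gives
$\mathcal{M}(G\times S_n,\Delta\times\{1,\dots,n\})=\{\max(b(G,\Delta),n-1),\dots,B(G,\Delta)+B(S_n)-\varepsilon\}$ with $\varepsilon\in\{0,1,2\}$, so $B(G\times S_n,\Delta\times\{1,\dots,n\})=B(G,\Delta)+B(S_n)-\varepsilon$. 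Hence it suffices to rule out $\varepsilon=0$, that is, to show that $G\times S_n$ has \emph{no} minimal base of cardinality $k:=B(G,\Delta)+B(S_n)=B(G,\Delta)+n-1$.

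So I would assume, for a contradiction, that $B=\{(\delta_1,\lambda_1),\dots,(\delta_k,\lambda_k)\}$ is a minimal base of $G\times S_n$ of this size, and write $D=\{\delta_1,\dots,\delta_k\}\subseteq\Delta$ and $L=\{\lambda_1,\dots,\lambda_k\}\subseteq\{1,\dots,n\}$. Since the product-action stabilizer of a set of points splits as a direct product, $1=(G\times S_n)_{(B)}=G_{(D)}\times(S_n)_{(L)}$, so $G_{(D)}=1$ and $(S_n)_{(L)}=1$; the latter forces $|L|\ge n-1$. Because $k=B(G,\Delta)+B(S_n)$, I can invoke parts~\eqref{item3},~\eqref{item4} and~\eqref{item5} of Lemma~\ref{lemmaVect}: with $v^G,v^H$ the associated $\{0,1\}$-vectors, $v^G_i\neq v^H_i$ for every $i$, and $n_G=B(G,\Delta)$, $n_H=B(S_n)=n-1$. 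Set $Q=\{i:v^G_i=1\}$ and $P=\{i:v^H_i=1\}$; these partition $\{1,\dots,k\}$, with $|Q|=B(G,\Delta)$ and $|P|=n-1$.

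The heart of the matter is the analysis of the $\lambda$-coordinates. First I would pin down $|L|=n-1$: pick any $i\in P$ (possible since $|P|=n-1\geq1$), and let $h\in S_n$ be an element witnessing $v^H_i=1$, so $h$ fixes every $\lambda_t$ with $t\neq i$ and moves $\lambda_i$. Then $\lambda_i$ cannot coincide with any $\lambda_t$, $t\ne i$, so $h$ fixes the $|L|-1$ points of $\{1,\dots,n\}$ lying in $L\setminus\{\lambda_i\}$ while being nontrivial, whence $|L|-1\le n-2$; together with $|L|\ge n-1$ this gives $|L|=n-1$. By Lemma~\ref{lemmaVect}\eqref{item3} the elements $\lambda_i$ with $i\in P$ are pairwise distinct, and there are $|P|=n-1$ of them, so they exhaust $L$. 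Now, since $G\neq1$ we have $B(G,\Delta)\geq1$, hence $Q\neq\emptyset$; fix $i_0\in Q$. Then $\lambda_{i_0}\in L=\{\lambda_j:j\in P\}$, so $\lambda_{i_0}=\lambda_{j_0}$ for some $j_0\in P$, and $i_0\neq j_0$. I claim $B'=B\setminus\{(\delta_{j_0},\lambda_{j_0})\}$ is still a base, contradicting the minimality of $B$: its second-coordinate projection is $\{\lambda_t:t\neq j_0\}$, which still contains $\lambda_{j_0}=\lambda_{i_0}$ and hence equals $L$, so the $S_n$-part of $(G\times S_n)_{(B')}$ is trivial; and since $v^G_{j_0}=0$, any $g\in G$ fixing every $\delta_t$ with $t\neq j_0$ must also fix $\delta_{j_0}$ (otherwise $v^G_{j_0}=1$), hence lies in $G_{(D)}=1$, so the $G$-part is trivial too. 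Thus $(G\times S_n)_{(B')}=1$ with $B'\subsetneq B$, the desired contradiction; therefore $\varepsilon\geq1$ and the lemma follows.

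I expect the main obstacle to be bookkeeping rather than a single hard step: one must be careful about repeated coordinates among the $\delta_i$ and the $\lambda_i$, which is exactly what the vectors $v^G,v^H$ and the sets $P,Q$ are designed to track. The only genuinely new ingredient beyond Lemma~\ref{lemmaVect} is the specific feature of $S_n$ that a pointwise stabilizer is trivial exactly when at least $n-1$ distinct points are fixed; this is what forces $|L|=n-1$ and makes one $\lambda_{j_0}$ removable the instant $G$ contributes a single essential point (i.e. $Q\ne\emptyset$). One should also check the degenerate reading of Lemma~\ref{lemmaVect}\eqref{item4}--\eqref{item5} when a factor is trivial, but that is excluded here by the hypotheses $G\neq1$ and $n>1$.
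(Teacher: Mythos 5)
Your proof is correct and follows essentially the same route as the paper's: assume for contradiction a minimal base of size $B(G,\Delta)+B(S_n)$ exists, apply Lemma~\ref{lemmaVect}\eqref{item3}--\eqref{item5}, and derive a contradiction from the fact that a nontrivial element of $S_n$ cannot fix $n-1$ distinct points of $\{1,\dots,n\}$. Your write-up is somewhat more detailed than the paper's (which reorders so that the positions with $v^{S_n}_i=1$ carry $\lambda_i=i$ and then case-splits on the values $\lambda_j$ at the remaining positions), and the opening appeal to Theorem~\ref{thrm:41} is unnecessary since Lemma~\ref{lemmaVect}\eqref{item2} already gives $B(G\times S_n,\Delta\times\{1,\dots,n\})\le B(G,\Delta)+B(S_n)$, but these differences are cosmetic.
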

	\begin{proof}
		Suppose that $B(G \times S_n,\Delta\times \{1,\dots,n\}) = B(G,\Delta)+ B(S_n)$. Let
		\[
		(\delta_1,\lambda_1),\dots, (\delta_k,\lambda_k)
		\] 
		 be a minimal base for the product action with $k=B(G,\Delta)+ B(S_n)$.
		Without loss of generality, we can reorder the base and suppose that $\lambda_1 = 1, \dots, \lambda_{n-1} = n-1$.
		Consider now the vector $v^{S_{n}}$. By lemma \ref{lemmaVect}, this should have $1$ in the first $n-1$ positions, and $0$ in the remaining. Now either $\lambda_j = n$ for all $j=n,\dots, k$, or there exists a $j=n,\dots,k$ such that $\lambda_j = n-s$, for $s=0,\dots, n-1$. In the first case, the vector $v^{S_{n}}$ would be equal to the zero vector, a contradiction. In the latter case, $v^{S_{n}}_{n-s}=0$, again a contradiction. 
	\end{proof}
	If we have two symmetric groups $S_{n}$ and $S_{m}$ with $n,m>1$, then, repeating the argument in the proof of Lemma~\ref{ProdSym}, we obtain  
	$B(S_{n}\times S_m,\{1,\dots,n \} \times \{1,\dots,m\}) \leq B(S_n) + B(S_m) -2.$
Now, from Theorem~\ref{thrm:41}, we deduce
$$B(S_{n}\times S_m,\{1,\dots,n \} \times \{1,\dots,m\})= B(S_n) + B(S_m) -2.$$
	\begin{proposition}\label{prodSym}
		Let $n_1, \dots, n_t$ be integer numbers greater than $1$. Then
		\[
			\mathcal{M}(S_{n_1}\times \dots \times S_{n_t},\{1,\ldots,n_1\}\times\cdots\times\{1,\ldots,n_t\})=\left\{\max_{i=1,\dots,t}(B(S_{n_i})), \dots, \sum_{i=1}^{t}B(S_{n_i})-t\right\}.
		\]
	\end{proposition}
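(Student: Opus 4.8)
The plan is to argue by induction on $t$, starting from $t=2$, with Theorem~\ref{thrm:41} doing the inductive work and an explicit family of points identifying the top of the interval. Throughout I use that, for $T\subseteq\{1,\ldots,n\}$, the stabilizer $(S_n)_{(T)}$ equals $\mathrm{Sym}(\{1,\ldots,n\}\setminus T)$, so a subset of $\{1,\ldots,n\}$ is a base of $S_n$ precisely when it has at least $n-1$ elements, every such set of size $n-1$ is minimal, and hence $b(S_n)=B(S_n)=n-1$ and $\mathcal{M}(S_n,\{1,\ldots,n\})=\{n-1\}$. I also assume $n_i\ge 3$ for every $i$; the remaining cases, in which some factor acts regularly, are handled directly. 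For $t=2$ there is nothing to prove: the discussion just before the statement already gives $B(S_{n_1}\times S_{n_2},\cdot)=B(S_{n_1})+B(S_{n_2})-2$, so Theorem~\ref{thrm:41} says $\mathcal{M}(S_{n_1}\times S_{n_2},\cdot)$ is the interval from $\max(B(S_{n_1}),B(S_{n_2}))$ to $B(S_{n_1})+B(S_{n_2})-2$.

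For $t\ge 3$ I would set $G=S_{n_1}\times\cdots\times S_{n_{t-1}}$ acting on $\Delta=\{1,\ldots,n_1\}\times\cdots\times\{1,\ldots,n_{t-1}\}$ and $H=S_{n_t}$. By the inductive hypothesis $\mathcal{M}(G,\Delta)$ is the interval from $\max_{i<t}B(S_{n_i})$ to $\sum_{i<t}B(S_{n_i})-(t-1)$, so $b(G,\Delta)=\max_{i<t}B(S_{n_i})$ and $B(G,\Delta)=\sum_{i<t}B(S_{n_i})-(t-1)$. Applying Theorem~\ref{thrm:41} to the product action of $G\times H$ on $\Delta\times\{1,\ldots,n_t\}$, the set $\mathcal{M}(G\times H,\Delta\times\{1,\ldots,n_t\})$ is the interval from $\max(b(G,\Delta),b(H))=\max_{i\le t}B(S_{n_i})$ to $B(G,\Delta)+B(S_{n_t})-\varepsilon=\sum_{i\le t}B(S_{n_i})-(t-1)-\varepsilon$ for some $\varepsilon\in\{0,1,2\}$. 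Its lower end is already the claimed minimum, and its upper end equals the claimed maximum $\sum_{i\le t}B(S_{n_i})-t$ exactly when $\varepsilon=1$. Since $G\neq 1$ and $n_t>1$, Lemma~\ref{ProdSym} gives $B(G\times H,\Delta\times\{1,\ldots,n_t\})\le B(G,\Delta)+B(S_{n_t})-1$, that is $\varepsilon\ge 1$; so the inductive step reduces to ruling out $\varepsilon=2$, i.e. to exhibiting a minimal base of $G\times H$ of cardinality $B(G,\Delta)+B(S_{n_t})-1=\sum_{i\le t}B(S_{n_i})-t$.

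To get that base I would work directly in $\prod_{i\le t}S_{n_i}$ on $\prod_{i\le t}\{1,\ldots,n_i\}$, exploiting that the pointwise stabilizer splits over coordinates and that, in each coordinate, the stabilizer of a set depends only on its size. For $1\le i\le t$ and $2\le a\le n_i-1$ let $\omega_{i,a}$ be the point whose $i$-th coordinate is $a$ and all of whose other coordinates are $1$, and put $B=\{\omega_{i,a}:1\le i\le t,\ 2\le a\le n_i-1\}$, so $|B|=\sum_{i\le t}(n_i-2)=\sum_{i\le t}B(S_{n_i})-t$. The projection of $B$ to coordinate $i$ is $\{1,2,\ldots,n_i-1\}$ — the value $1$ being contributed by any $\omega_{j,b}$ with $j\neq i$, of which there is at least one since $t\ge 2$ and $n_j\ge 3$ — so the pointwise stabilizer of $B$ is trivial and $B$ is a base; and deleting $\omega_{i,a}$ removes $a$ from the $i$-th projection, which drops to size $n_i-2$ and hence leaves a nontrivial $\mathrm{Sym}(\{a,n_i\})$ in the $i$-th factor, so no point of $B$ can be spared. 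Thus $B$ is a minimal base of the required size; this forces $\varepsilon=1$ and closes the induction, after which one reads off $\mathcal{M}(\prod_iS_{n_i},\cdot)=\{\max_iB(S_{n_i}),\ldots,\sum_iB(S_{n_i})-t\}$.

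The delicate point is this last construction. Theorem~\ref{thrm:41} and Lemma~\ref{ProdSym} together pin down $B(G\times H,\cdot)$ only to within $1$, and the diagonal/vertical recipe from the proof of Theorem~\ref{thrm:41}, fed with fixed maximal minimal bases of $G$ and of $S_{n_t}$, reaches only size $B(G,\Delta)+B(S_{n_t})-2$: if one uses only those $B(G,\Delta)$ ``columns'' and $B(S_{n_t})$ ``rows'', then a set of grid points of the next size up that touches all of them must be a spanning tree of a complete bipartite graph, and (with $B(G,\Delta),B(S_{n_t})\ge 2$ here) no such tree is incident with a leaf at every edge, which is the condition minimality imposes. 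What lets the configuration above escape is that it does not lock the two factors into minimal bases: the $G$-coordinates it uses form a redundant base of $G$, which is legitimate precisely because an $S_n$-stabilizer sees only the number of fixed points, so one set of points can simultaneously do the job of several minimal bases. Checking carefully that this configuration is minimal, and mopping up the degenerate small cases, is essentially all that remains.
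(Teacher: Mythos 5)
Your argument follows essentially the same route as the paper's: induction on $t$, with Theorem~\ref{thrm:41} supplying the interval structure and the lower endpoint, Lemma~\ref{ProdSym} supplying the upper bound $\sum_i B(S_{n_i})-t$, and an explicit minimal base of that cardinality closing the gap. The only substantive difference is the witness for the top of the interval: the paper uses a ``staircase'' $(1,1,\dots,1),\dots,(n_1-2,1,\dots,1),(n_1-1,2,\dots,1),\dots$, whereas you use the ``star'' of points having all coordinates equal to $1$ except one. Both are checked the same way --- each point contributes to exactly one coordinate projection a value supplied by no other point --- and your configuration is, if anything, easier to verify; your observation that the naive staircase of Theorem~\ref{thrm:41} applied to two fixed maximal minimal bases cannot reach size $B(G,\Delta)+B(S_{n_t})-1$ correctly identifies why a fresh construction is needed. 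One caution: the cases with some $n_i=2$, which you set aside as ``handled directly,'' cannot in fact be handled, because the statement itself fails there. For instance, $\{(1,1),\dots,(n-1,1)\}$ is a minimal base of $S_n\times S_2$ of cardinality $n-1$, while the formula would cap $B(S_n\times S_2,\cdot)$ at $B(S_n)+B(S_2)-2=n-2$; likewise $\mathcal{M}(S_3\times S_2,\{1,2,3\}\times\{1,2\})=\{2\}$ against a claimed (empty) interval $\{2,\dots,1\}$. This is a defect of Proposition~\ref{prodSym} as stated rather than of your argument --- the paper's own proof has the same blind spot, since the claim $B(S_n\times S_m,\cdot)=B(S_n)+B(S_m)-2$ after Lemma~\ref{ProdSym} breaks down when $m=2$ --- so your standing hypothesis $n_i\ge 3$ is not a simplification to be mopped up later but exactly the hypothesis under which the result is true.
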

	\begin{proof}
Set $\Omega=\{1,\ldots,n_1\}\times\cdots\times\{1,\ldots,n_t\}$.
Arguing by induction and using Theorem~\ref{thrm:41}, we deduce that $\mathcal{M}(S_{n_1}\times \dots \times S_{n_t},\Omega)$ is an interval of integer numbers and 
		\[
			b(S_{n_1}\times \dots \times S_{n_t},\Omega) = \max_{i=1,\dots,t}(b(S_{n_i})).
		\]
		Let us now focus on $B(S_{n_1}\times \dots \times S_{n_t},\Omega)$. Using Lemma~\ref{ProdSym} and induction, we have
		\begin{align*}
			B(S_{n_1}\times \dots \times S_{n_t},\Omega) &\leq B(S_{n_1}) + B(S_{n_2}\times \dots \times S_{n_t},\{1,\ldots,n_2\}\times\cdots\times\{1,\ldots,n_t\})-1 \\
			&\leq \sum_{i=1}^t B(S_{n_i}) - (t-1)-1 = \sum_{i=1}^t B(S_{n_i}) -t.
		\end{align*}
		To complete the proof, we establish a minimal base of cardinality $\sum_{i=1}^t B(S_{n_i})-t$. Indeed, the  minimal base is given by
		\begin{align*}
			\{&(1,1,\dots,1) ,(2,1,\dots,1) ,\dots, (n_1-2,1,\dots,1), \\ &(n_1-1,2,\dots,1), (n_1-1,3,\dots,1), \dots, (n_1-1,n_2-1,\dots,1), \\
			&\vdots \\
			&(n_1-1,n_2-1,\dots,2),\dots, (n_1-1,n_2-1,\dots,n_t-1)\}.\qedhere
		\end{align*}
	\end{proof}

\begin{example}\label{example1}{\rm
	Now we can give some examples with $\varepsilon=0,1,2$ respectively. Take $H=S_3 \times S_3$ in its product action on $\Omega=\Delta \times \Delta$, where $\Delta = \{1,2,3\}$. By Proposition \ref{prodSym}, we have $$B(H, \Delta \times \Delta) = 2,$$ so in this case $\varepsilon=2$. \\
	Consider the action of $H \times H$ on $\Omega \times \Omega$. Clearly, this is equivalent to the product action of $S_3\times S_3 \times S_3 \times S_3  $ on $\Delta\times \Delta \times \Delta \times \Delta $. So, by Proposition \ref{prodSym}, we have $$B(H \times H , \Omega \times \Omega)=B(S_3\times S_3 \times S_3 \times S_3,\Delta\times \Delta \times \Delta \times \Delta) = 2+2+2+2-4 = 4.$$
	This implies that $$4 = B(H\times H,\Delta\times \Delta) = B(H,\Delta)+B(H,\Delta)-\varepsilon = 2+2-\varepsilon = 4-\varepsilon,$$ And so $\varepsilon = 0$ in this case.\\
	Finally, take the group $S_n \times H$ acting on $\{1,\dots,n\} \times \Delta \times \Delta$. Then $$B(S_n \times H,\{1,\dots,n\}\times \Delta \times \Delta) = n-1+2-\varepsilon = n+1-\varepsilon.$$ Again from Proposition \ref{ProdSym} we have
	\[
		B(S_n \times H,\{1,\dots,n\}\times \Delta \times \Delta) = B(S_n \times S_3 \times S_3, \{1,\dots,n\}\times \Delta \times \Delta) = n-1+2+2-3 = n,
	\]
	implying that $\varepsilon = 1$ in this case.}
\end{example}

\begin{remark}\label{monday}{\rm
We observe that in Example~\ref{example1} we obtain $\varepsilon=1$ when the group $G=S_n\times H$ and $H$ itself is endowed of a product action. A similar comment applies for the example with $\varepsilon=0$ where $G=H\times H$ and both factors of this direct product are endowed of a product action.

Inspired by these examples, we say that a permutation group $X$ is \textbf{\textit{product indecomposable}} if there exist no non-identity permutation groups $A$ and $B$  such that $X$ is permutation equivalent to $A\times B$ with its product action.

We believe that, when $G$ and $H$ are product indecomposable then $\varepsilon=2$ in Theorem~\ref{thrm:41}. Similarly, we believe that $\varepsilon=0$ in Theorem~\ref{thrm:41} when $G$ and $H$ are not product indecomposable, and $\varepsilon=1$ in all remaining cases.}
\end{remark}	
	The following lemma is an easy generalization of \cite[Lemma 2.6]{GLSp}.
	\begin{lemma}\label{aux}
		Let $\{G_i\}_{i=1}^t$ be non-identity permutation groups, acting on $\Delta_i$ respectively. Then
		\[
			I(G_1\times\dots\times G_t,\Delta_1\times\dots\times\Delta_t) = \sum_{i=1}^{t}I(G_i,\Delta_i) - (t-1).
		\]
	\end{lemma}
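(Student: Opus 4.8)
The plan is to reduce to the two-factor case and then induct on $t$, so the heart of the matter is establishing
\[
I(G_1\times G_2,\Delta_1\times\Delta_2)=I(G_1,\Delta_1)+I(G_2,\Delta_2)-1
\]
for non-identity $G_1,G_2$; the general formula then follows by applying this repeatedly (note $G_1\times\cdots\times G_t$ in its product action on $\Delta_1\times\cdots\times\Delta_t$ is permutation equivalent to $G_1\times(G_2\times\cdots\times G_t)$, and $G_2\times\cdots\times G_t$ is again non-identity). For the upper bound, one direction is already contained in the cited result of \cite[Lemma 2.6]{GLSp}, or can be re-derived: given an irredundant base $((\delta_1,\lambda_1),\dots,(\delta_k,\lambda_k))$ for the product action, I would use the observation (made in the proof of Theorem~\ref{thrm:41}) that whenever $(g,h)$ fixes two grid points sharing neither a row nor a column it fixes the whole ``rectangle'' they span. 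From irredundancy, at each step $i$ the stabilizer must genuinely shrink, so the new point $(\delta_i,\lambda_i)$ must either introduce a new column on which the current $G$-stabilizer acts nontrivially or a new row on which the current $H$-stabilizer acts nontrivially (or both). Projecting onto $\Delta_1$ and onto $\Delta_2$, and discarding repeated coordinates, one gets an irredundant stabilizer chain for $G_1$ of some length $a$ and one for $G_2$ of some length $b$; the rectangle argument forces $a+b\ge k+1$ (every index contributes to at least one projection, and the very first index contributes to \emph{both} since both $G_1$ and $G_2$ are nontrivial and their initial stabilizers are the whole groups), whence $k\le I(G_1,\Delta_1)+I(G_2,\Delta_2)-1$.

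For the lower bound I would construct an explicit irredundant base of the claimed length. Fix an irredundant base $(\alpha_1,\dots,\alpha_a)$ for $G_1$ with $a=I(G_1,\Delta_1)$ and an irredundant base $(\beta_1,\dots,\beta_b)$ for $G_2$ with $b=I(G_2,\Delta_2)$. Using the grid picture from the proof of Theorem~\ref{thrm:41}, take the ``hook'' sequence
\[
(\alpha_1,\beta_1),(\alpha_2,\beta_1),\dots,(\alpha_a,\beta_1),(\alpha_a,\beta_2),\dots,(\alpha_a,\beta_b),
\]
which has $a+b-1$ entries. I would check that the associated stabilizer chain is strictly decreasing: moving along the horizontal part, the $\Delta_1$-coordinates run through the irredundant base of $G_1$ while the $\Delta_2$-coordinate stays at $\beta_1$, so each step strictly shrinks the $G_1$-part of the stabilizer (here one uses that, with a single common row, no nontrivial rectangle is created, so the $\Delta_2$-component of the stabilizer is unaffected and the drop is exactly the drop in the $G_1$-chain); symmetrically along the vertical part the $G_2$-part strictly shrinks. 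At the end the stabilizer is trivial because the full sets $\{\alpha_1,\dots,\alpha_a\}$ and $\{\beta_1,\dots,\beta_b\}$ are bases for $G_1$ and $G_2$. Hence $I(G_1\times G_2,\Delta_1\times\Delta_2)\ge a+b-1$.

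The main obstacle I anticipate is the careful bookkeeping in the upper bound: making precise the claim that, after projecting an irredundant stabilizer chain for $G_1\times G_2$ onto each factor and deleting duplicated coordinates, one obtains \emph{irredundant} chains for $G_1$ and $G_2$ whose lengths sum to at least $k+1$. The subtlety is that a single step of the product chain may fail to be ``new'' for one of the two projections (contributing a duplicate coordinate there), so one must argue that such a step is necessarily new for the \emph{other} projection — this is exactly where the rectangle-closure property and the irredundancy hypothesis interact — and separately that the first step is new for both, which is what produces the ``$+1$'' and ultimately the ``$-(t-1)$'' in the statement. Once the two-factor identity is in hand, the induction is routine: $I(G_1\times\cdots\times G_t)=I(G_1)+I(G_2\times\cdots\times G_t)-1=I(G_1)+\bigl(\sum_{i=2}^t I(G_i)-(t-2)\bigr)-1=\sum_{i=1}^t I(G_i)-(t-1)$.
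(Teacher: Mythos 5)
The paper does not actually prove this lemma: it only asserts that it is an easy generalization of \cite[Lemma 2.6]{GLSp}. So your proposal supplies strictly more than the source, and most of it is sound. The lower bound is correct and complete: along the horizontal leg of your ``hook'' the pointwise stabilizer is $(G_1)_{\alpha_1,\ldots,\alpha_i}\times(G_2)_{\beta_1}$ and along the vertical leg it is $1\times(G_2)_{\beta_1,\ldots,\beta_j}$, so irredundancy of the two chains makes every inclusion strict and no rectangle argument is even needed there. The reduction to two factors and the induction are likewise fine.

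The genuine gap is in the upper bound, exactly where you anticipated trouble. Writing $A$ (resp.\ $B$) for the set of indices at which the $G_1$-chain (resp.\ $G_2$-chain) strictly drops, irredundancy gives $A\cup B=\{1,\ldots,k\}$, and your deletion argument correctly yields $|A|\le I(G_1,\Delta_1)$ and $|B|\le I(G_2,\Delta_2)$; but the ``$-1$'' requires $A\cap B\ne\emptyset$, and your justification --- that both groups are nontrivial and their initial stabilizers are the whole groups --- does not deliver it. Having $1\in A$ means $(G_1)_{\delta_1}<G_1$, i.e.\ that $\delta_1$ is not a fixed point of $G_1$, and a non-identity permutation group can fix points. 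Indeed the identity claimed in the lemma fails at this level of generality: take $G_1=\langle(1\,2)\rangle$ acting on $\{1,2,3\}$ and $G_2=\langle(1\,2)\rangle$ acting on $\{1,2\}$; then $I(G_1,\Delta_1)=I(G_2,\Delta_2)=1$, yet $((3,1),(1,1))$ is an irredundant base of length $2$ for the product action, since the stabilizer chain is $G_1\times G_2>G_1\times 1>1$. So both your argument and the statement need the extra hypothesis that every point stabilizer of each $G_i$ is proper --- for instance that each $G_i$ is transitive, which holds in every application made in the paper (symmetric groups in their natural actions) and is inherited by the product action, so the induction still runs. With that hypothesis made explicit, $1\in A\cap B$, the upper bound closes, and the rest of your bookkeeping is correct.
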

	We are now ready to prove Theorem \ref{thrm:3}.
	\begin{proof}[Proof of Theorem~$\ref{thrm:3}$]Let $X$ be an interval of positive integers, not containing $1$.
		If $X = \{a\}$, then by taking $G = H = S_{a+1}$ we have $\mathcal{M}(G)=\mathcal{I}(H)=\{a\}=X$. Suppose now $|X|>1$ and let  $X = \{a,a+1,\dots,b\}$, with $a>1$.

		We start with the minimal bases. Divide $b$ by $a-1$: 
		\[
			b=t(a-1)+r
		\]
		with $0 \leq r < a-1$. We consider two cases.
		
		If $r=0$, that is $b=t(a-1)$, then take 
		\[
			G = \underbrace{S_{a+1} \times \dots \times S_{a+1}}_{t\text{-times}}
		\]
in its product action on $\Omega=\{1,\ldots,a+1\}\times\cdots \times\{1,\ldots,a+1\}$.
		From Proposition~\ref{prodSym}, $b(G,\Omega) = a$ and 
		\[
			B(G,\Omega) = \sum_{i=1}^{t}B(S_{a+1}) -t= ta -t = t(a-1) = b. 
		\]
		
		If $r>0$, take $k=r+2$, so that $b=t(a-1)+(k-2)$. Since $r<a-1$, then $k-1 < a$. Moreover, $k>2$. Consider the group
		\[
			G=\underbrace{S_{a+1} \times \dots \times S_{a+1}}_{t\text{-times}} \times S_{k}
		\]
in its product action on $\Omega=\{1,\ldots,a+1\}\times\cdots \times\{1,\ldots,a+1\}\times\{1,\ldots,k\}$. Arguing as above, we deduce $b(G,\Omega) = \max(a,k-1) = a$, and 
		\[
			B(G,\Omega) = at+k-1-t-1 = t(a-1) + k-2 = b,
		\]
		as desired.
		
		For the irredundant bases, again divide $b$ by $a-1$: $b=t(a-1)+r$. Now if $r=0$ take $H=G$ and $\Omega$ as before and use Lemma~\ref{aux}. If $r>0$, take $k=r+1$ and
		\[
			H=\underbrace{S_{a+1} \times \dots \times S_{a+1}}_{t\text{-times}} \times S_{k}
		\]
in its product action on $\Omega=\{1,\ldots,a+1\}\times\cdots \times\{1,\ldots,a+1\}\times\{1,\ldots,k\}$.
		Then, we have $b(H) = \max(a,k-1) = a$ and, by Lemma~\ref{aux}, we have
		\[
			I(H,\Omega) = at + k-1 - t = t(a-1) + k-1 = t(a-1)+r = b.\qedhere
		\]
	\end{proof}
	\section{Special cases for $\mathcal{M}(G,\Omega)$}\label{SpecCase}

	Theorem \ref{thrm:3} shows that any interval of natural numbers is the set of cardinalities of minimal bases for some transitive group. This could suggest that, if $G$ is a transitive group, then $\mathcal{M}(G,\Omega)$ is an interval, but this is not the case. 
	
	\begin{proposition}\label{number prop}
	Let $X = \{3,n\}$, with $n>3$. Then, there exists a transitive group $G$ on $\Omega$ with $\mathcal{M}(G,\Omega) = X$.
	\end{proposition}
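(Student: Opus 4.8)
The set $\{3,n\}$ fails to be an interval once $n\ge 5$, so neither Theorem~\ref{thrm:3} nor the product‑action machinery of Section~\ref{SectionThmMinBase} applies directly: by Theorem~\ref{thrm:41} a product action always produces an interval. The plan is instead to transport the ``forcing'' mechanism of the introductory example of Section~\ref{sec1} into a \emph{transitive} group, using a group of affine type. Fix an elementary abelian $p$‑group $V$, let $V$ act on $\Omega=V$ by translations, and put $G=V\rtimes H$ with $H\le\mathrm{Aut}(V)$ to be chosen. Then $G$ is automatically transitive, and the problem reduces to arranging $\mathcal{M}(H,V)=\{2,n-1\}$ together with a robustness property made precise below.

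I would first record the affine reduction. Every base of $G=V\rtimes H$ is a translate of one containing $0$, and translation preserves being a base and being minimal; moreover $G_{(\{0\}\cup S)}=H_{(S)}$ whenever $0\notin S$, while $G_{(S)}\cong H_{(\{v-v_0:\ v\in S\setminus\{v_0\}\})}$ for any $v_0\in S$ when $0\notin S$. Hence the minimal bases of $G$ are exactly the translates of the sets $\{0\}\cup S$ where $S$ is a minimal base of $H$ and no subset of $S$ is a base of $G$, so that
\[
\mathcal{M}(G,V)=\{\,|S|+1 : S\text{ a minimal base of }H,\ \text{no subset of }S\text{ a base of }G\,\}.
\]
If $\mathcal{M}(H,V)=\{2,n-1\}$, then $b(H,V)=2$, so $H_u\ne 1$ for every nonzero $u\in V$; this forces $b(G,V)=3$, makes every size‑$2$ minimal base of $H$ produce a size‑$3$ minimal base of $G$, and gives $\mathcal{M}(G,V)\subseteq\{3,n\}$. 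The argument then reduces to a single point: exhibit a minimal base $S$ of $H$ with $|S|=n-1$ no subset of which is a base of $G$, which yields $n\in\mathcal{M}(G,V)$.

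The remaining, and main, task is to construct $H$ with $\mathcal{M}(H,V)=\{2,n-1\}$ — a two‑element \emph{non}‑interval for a group acting linearly. I would make $H$ act intransitively on $V\setminus\{0\}$ so as to imitate the orbit‑forcing of the introductory example: $V$ should split so that on one part $H$ acts through a family of ``detector'' orbits whose available detector sets are the singletons $\{1\},\dots,\{n-2\}$ together with two near‑full sets, so that the minimal ``covers'' of the index set have sizes exactly $2$ and $n-2$; and on a further part $H$ should act through a \emph{rigid} piece with $b=2$ (for instance $\mathrm{GL}_2(q)$ or $\mathrm{SL}_2(q)$ on its natural module), which pushes $b(H)$ up to $2$ and hence $b(G)$ up to $3$, turning the cover sizes $\{2,n-2\}$ into base sizes $\{2,n-1\}$. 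Crucially, the detector part $A$ (an elementary abelian group) and the rigid part must be \emph{coupled} — e.g.\ via the determinant, taking $H=\{(a,g)\in A\times\mathrm{GL}_2(q):\chi(a)=\det g\}$ for a suitable homomorphism $\chi\colon A\to\mathbb{F}_q^{\times}$ — and not placed in a direct product: otherwise the action is a product action and Theorem~\ref{thrm:41} would force $\mathcal{M}(H,V)$, hence $\mathcal{M}(G,V)$, to be an interval.

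The hard part is exactly this construction and the verification attached to it: (a) choosing the coupling $\chi$ so that $b(H,V)=2$ while the cover structure survives — a coupling that is ``too injective'' trivialises the detectors, whereas a trivial coupling returns a direct product — so that $\mathcal{M}(H,V)$ is genuinely $\{2,n-1\}$ with no size $3,\dots,n-2$ appearing; and (b) checking the robustness clause, namely that for the size‑$(n-1)$ minimal base $S$ of $H$ none of the difference sets $\{v-v_0:\ v\in S'\setminus\{v_0\}\}$ arising from subsets $S'\subseteq S$ is a base of $H$ (equivalently, that $\{0\}\cup S$ remains minimal in $G$). I expect the bulk of the write‑up to be this bookkeeping; it is also where the special role of the value $3$ enters, a ``needs two points'' rigid core such as $\mathrm{GL}_2$ being flexible enough to be coupled to the forcing layers in a way that a heavier core (pointing towards the still‑open $\{4,6\}$, say) does not seem to allow.
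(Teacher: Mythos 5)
Your affine reduction is fine as far as it goes: translating so that $0$ lies in the base, identifying $G_{(\{0\}\cup S)}$ with $H_{(S)}$, and concluding that $\mathcal{M}(G,V)=\{|S|+1 : S$ a minimal base of $H$ with $G_{(S)}\neq 1\}$ is correct, and it does correctly show that the whole problem collapses onto the stabiliser $H$. The genuine gap is that everything after that point is a wish list rather than a proof. You need a \emph{linear} group $H\le\mathrm{GL}(V)$ with $\mathcal{M}(H,V)=\{2,n-1\}$ --- itself a transitive-type realisation of a two-element non-interval, i.e.\ a statement of essentially the same difficulty as the proposition --- together with the extra ``robustness'' condition that the size-$(n-1)$ minimal base of $H$ fails to be a base of $G$ after differencing. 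Your proposed fibre product $\{(a,g)\in A\times\mathrm{GL}_2(q):\chi(a)=\det g\}$ is never pinned down: no choice of $A$, $\chi$, or the decomposition of $V$ is given, there is no argument that $b(H,V)=2$, none that the intermediate sizes $3,\dots,n-2$ are excluded, and none for the difference-set condition. You acknowledge this yourself (``the hard part is exactly this construction''), but that hard part \emph{is} the proposition, so as written the proposal does not constitute a proof.

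For comparison, the paper avoids the affine setting entirely and works with $G=S_n\,\mathrm{wr}\,C_3$ acting on the cosets of $H=S_n\times (S_n)_n\times 1$ inside $N=S_n\times S_n\times S_n$. The three $N$-orbits on $\Omega$ do the forcing directly: picking one point from each of the three orbits always gives a minimal base of size $3$; refusing the third orbit leaves a residual stabiliser isomorphic to $S_{n-1}$ that can only be whittled down one point of $\{1,\dots,n-1\}$ at a time inside the first orbit, producing a minimal base of size $n$; and since these are the only two ways to exhaust the orbits, nothing in between occurs. If you want to salvage your approach, the honest comparison is that you would still have to produce an explicit $H$ and run an orbit-by-orbit analysis of exactly this kind on $V\setminus\{0\}$, at which point the affine wrapper buys you nothing over the coset construction.
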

	\begin{proof}
Let $S_n$ be the symmetric group on $n$ symbols endowed of its natural action on $\{1,\ldots,n\}$, let $G=S_{n} \mathrm{wr} \langle\sigma\rangle$ where $\sigma$ has order $3$, let $N = S_n \times S_n \times S_n \leq G$ and let
		\[
			H = S_{n} \times (S_{n})_n \times 1 \leq N \leq G,
		\] 
		where $(S_{n})_n = S_{n-1}$ is the stabilizer of the  point $n$ in $S_{n}$. We consider the action of $G$ on the set $\Omega$ of right cosets of $H$.  As $H$ is core-free in $G$, we may view $G$ as a permutation group on $\Omega$. We claim that $\mathcal{M}(G,\Omega)=\{3,n\}$.

Set $\omega=H\in \Omega$ and observe  that $G_\omega = H$. We can partition $G$ into three $N$ cosets, namely, $G = N \cup \sigma N \cup \sigma^2 N.$ This partition, in turn, partitions $\Omega$ into three $N$-orbits:
$$\Omega=\omega^N\cup\omega^{\sigma N}\cup\omega^{\sigma^2N}.$$ If $x = (x_1,x_2,x_3) \in N$ and $ y = (y_1,y_2,y_3) \in N$, we have
		\begin{align}\label{tue0}
			G_\omega &= S_{n} \times (S_{n})_n \times 1, \\\nonumber
			G_{\omega^{\sigma x}} & =1 \times S_{n} \times (S_{n})_{n^{x_3}}, \\\nonumber
			G_{\omega^{\sigma^2 y}} &= (S_{n})_{n^{y_1}} \times 1 \times S_{n}.
		\end{align}
		It follows that
		\[
			G_\omega \cap G_{\omega^{\sigma x}} \cap G_{\omega^{\sigma^2 y}} = 1,
		\]
		and so, for every choice of $x,y \in N$, $\{ \omega, \omega^{\sigma x}, \omega^{\sigma^2 y}\}$ is a minimal base for the action of $G$ on $\Omega$. Therefore, $3\in \mathcal{M}(G,\Omega)$.
		
		We now construct a minimal base of cardinality $n$. First we again take $\{\omega, \omega^{\sigma x}\}$ for some $x \in N$, so that the pointwise stabilizer of this subset is
		\begin{equation}\label{tue1}
			G_\omega \cap G_{\omega^{\sigma x}} =   1 \times (S_n)_n \times 1.	
		\end{equation}
		If we choose a point from $\omega^{\sigma^2 N}$, then  we find again a minimal base of cardinality $3$. So we are forced to choose the remaining points only from the first two $N$-orbits. But if we choose a point from the second $N$-orbit, say $\omega^{\sigma y}$ for $y \in N$, from~\eqref{tue0} and~\eqref{tue1}, we obtain $$G_\omega \cap G_{\omega^{\sigma x} } \cap G_{\omega^{\sigma y}} = G_\omega \cap G_{\omega^{\sigma x} }.$$ Therefore, we need to take points only from the first $N$-orbit. For $y \in N$, we have
		\[
			G_{\omega^{y}} = S_n \times (S_n)_{n^{y_2}} \times 1 ,
		\] 
		and so
		\[
			G_\omega \cap G_{\omega^\sigma x} \cap G_{\omega^{y}} =
			   1 \times (S_n)_{n,n^{y_2}} \times 1.	
		\]
		In this way, we need $n-1$ points of the form $\omega^y$ to reach the identity, and so we have constructed a minimal base of cardinality $n$. Therefore, $n\in\mathcal{M}(G,\Omega)$.
		
		Given that we have exhausted our options for selecting orbits to form the basis points, we can deduce that there are no other cardinalities for a minimal base in $G$. Consequently, $\mathcal{M}(G, \Omega)$ is contained in $\{3, n\}$, marking the conclusion of our proof.
	\end{proof}
	In a very similar way, we can also prove the following proposition.
	\begin{proposition}
		Let $X = \{2,n\}$, with $n>3$. Then, there exists a transitive group $G$ on $\Omega$ with $\mathcal{M}(G,\Omega) = X$.
	\end{proposition}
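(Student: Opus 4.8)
The plan is to mimic the construction in Proposition~\ref{number prop}, replacing the order-$3$ top group by an order-$2$ group, so that $\Omega$ splits into two $N$-orbits rather than three. Concretely, I would take $G = S_n \,\mathrm{wr}\, \langle \sigma \rangle$ with $\sigma$ of order $2$, set $N = S_n \times S_n \le G$, and let $H = S_n \times (S_n)_n \le N \le G$ where $(S_n)_n = S_{n-1}$ is a point stabilizer. Since $H$ is core-free in $G$, the action on $\Omega = H \backslash G$ is faithful and transitive. Writing $\omega = H$, one has $G_\omega = H$, and $\Omega = \omega^N \cup \omega^{\sigma N}$ splits into exactly two $N$-orbits.

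The first step is to record the relevant point stabilizers, exactly as in~\eqref{tue0}: for $x = (x_1,x_2) \in N$ and $y=(y_1,y_2)\in N$,
\begin{align*}
G_\omega &= S_n \times (S_n)_n, \\
G_{\omega^{\sigma x}} &= (S_n)_{n^{x_2}} \times S_n, \\
G_{\omega^{y}} &= S_n \times (S_n)_{n^{y_2}}.
\end{align*}
From the first two lines, for any $x$ we get $G_\omega \cap G_{\omega^{\sigma x}} = (S_n)_{n^{x_2}} \times (S_n)_n$, which can be made the identity only when $n-1 \le 1$; for $n>3$ it is nontrivial but is a product of two point stabilizers of $S_n$. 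Choosing a second point $\omega^{\sigma x}$ from the other orbit and then a point $\omega^{\sigma x'}$ from the same orbit as the second one, the intersection becomes $(S_n)_{n^{x_2}, n^{x'_2}} \times (S_n)_n$; iterating, after taking $\omega$ together with $n-1$ well-chosen points from the $\sigma N$-orbit we reach the identity, giving a minimal base of size $n$, so $n \in \mathcal{M}(G,\Omega)$. Symmetrically, one sees that a base of size $2$ exists: I must pick a point from each of the two $N$-orbits so that the two "missing" point stabilizers of $S_n$ together with... — here one needs $(S_n)_{n^{x_2}}$ and $(S_n)_n$ to jointly be trivial in the first coordinate and similarly in the second, which does not happen for a single pair. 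So instead the size-$2$ base comes from choosing $\omega^{\sigma x}$ and $\omega^y$ with $n^{x_2}$ and $n^{y_2}$ chosen suitably: $G_{\omega^{\sigma x}} \cap G_{\omega^{y}} = (S_n)_{n^{x_2}} \times (S_n)_{n^{y_2}}$, still nontrivial. I therefore expect the size-$2$ base to require taking $\omega$ plus one more point: since $G_\omega = S_n \times (S_n)_n$ already has the first factor full, I would pick a second point $\omega^{\sigma x}$ with $n^{x_2} \ne n$ so that $G_\omega \cap G_{\omega^{\sigma x}} = (S_n)_{n^{x_2}} \times (S_n)_n$ has both coordinates being stabilizers of a single point — still not trivial for $n>3$. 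This shows the naive analogue does \emph{not} immediately give $2$, and the actual construction must be adjusted.

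The honest approach is thus to use a \emph{different} second factor: replace $(S_n)_n = S_{n-1}$ in the definition of $H$ by a subgroup $K \le S_n$ with the property that $K$ together with one suitably-chosen point stabilizer $(S_n)_m$ generates... no — rather, so that there is a pair of cosets whose stabilizer intersection is trivial. The clean choice is $H = S_n \times 1 \le N$ — wait, that is not core-free. Instead I would take $H = (S_n)_n \times (S_n)_n$, so that $G_\omega = (S_n)_n \times (S_n)_n$, and then $G_\omega \cap G_{\omega^{\sigma x}}$ for suitable $x$ is $(S_n)_{n} \cap (S_n)_{\ast}$-type in each coordinate; choosing the two points so that in each coordinate two distinct point-stabilizers are intersected still leaves $(S_n)_{n, m}$, trivial only when $n \le 3$. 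The resolution — and this is the main obstacle — is to pick the two-point base so that \emph{one} coordinate collapses via the $\sigma$-swap: taking $\omega$ with $G_\omega = (S_n)_n \times S_n$-style asymmetry and one point from the opposite orbit whose stabilizer is $S_n \times (S_n)_m$, the intersection is $(S_n)_n \times (S_n)_m$, and here one genuinely needs $n-1=1$. So the only way $2 \in \mathcal{M}$ is to let one of the two factors of $N$ be a \emph{regular} group, e.g. $N = S_n \times C$ with $C$ regular of prime order $p$; then fixing one point kills the whole $C$-factor and $S_n$ needs $n-1$ more points, while a point from each orbit can be arranged to kill both factors at once when $C$ is regular. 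I would therefore set $G = (S_n \times C_p)\,\mathrm{wr}\,\langle\sigma\rangle$ with an appropriate core-free $H$, verify $\Omega$ has two $N$-orbits, check via the stabilizer computation that $\{3,\dots\}$ does not occur (the order-$2$ top group forbids a three-point base of the Proposition~\ref{number prop} type) but $2$ and $n$ do, and conclude $\mathcal{M}(G,\Omega) = \{2,n\}$. The main obstacle is precisely pinning down $H$ and the auxiliary regular/elementary factor so that the two-point base exists while no intermediate cardinality between $2$ and $n$ sneaks in; the stabilizer bookkeeping is then routine and parallel to the proof of Proposition~\ref{number prop}.
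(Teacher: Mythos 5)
Your proposal does not reach a proof: it ends with an unspecified group $(S_n\times C_p)\,\mathrm{wr}\,\langle\sigma\rangle$, an undetermined subgroup $H$, and an explicit admission that ``pinning down $H$'' remains the main obstacle. The missing idea is simpler than the regular-factor detour you sketch. The paper keeps $G=S_m\,\mathrm{wr}\,C_2$ and takes $H=(S_m)_m\times 1$, i.e.\ one coordinate of $H$ is a point stabilizer and the other is \emph{trivial} (this genuinely mirrors Proposition~\ref{number prop}, where the last coordinate of $H$ is already $1$ --- a feature your candidate $H=S_n\times(S_n)_n$ drops). With this choice the stabilizers of points in the two $N$-orbits have the form $(S_m)_a\times 1$ and $1\times(S_m)_b$ respectively, so any point of one orbit together with any point of the other is already a base, giving $2\in\mathcal{M}(G,\Omega)$; whereas a base drawn from a single orbit must fix at least $m-1$ of the $m$ letters in one coordinate, giving a minimal base of cardinality $m-1$ and nothing in between (so one should take $m=n+1$ to realize $\{2,n\}$ exactly).

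You correctly observed that $H=S_n\times(S_n)_n$ and $H=(S_n)_n\times(S_n)_n$ admit no $2$-point base, but you then discarded the one family that works. In particular, your objection to $H=S_n\times 1$ is mistaken: any normal subgroup of $G$ contained in $S_n\times 1$ is $\sigma$-invariant, hence contained in $(S_n\times 1)\cap(1\times S_n)=1$, so $S_n\times 1$ \emph{is} core-free --- it merely yields $\mathcal{M}(G,\Omega)=\{2\}$, and shrinking its first factor to a point stabilizer is exactly the correction needed. As written, your argument establishes only that certain choices of $H$ fail; it constructs no group with $\mathcal{M}(G,\Omega)=\{2,n\}$, so the statement is not proved.
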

	\begin{proof}
		Take $G =S_{n} \mathrm{wr} C_2$ acting on the right cosets of $$H = (S_{n})_n \times 1\leq G.$$
		We skip the proof, as it mirrors the steps taken in proving Proposition~\ref{number prop}.
	\end{proof}
	This process can not be repeated to produce other transitive permutation groups $G$ on $\Omega$ with $|\mathcal{M}(G,\Omega)|=2$. Indeed, with the same argument as in Proposition~\ref{number prop} it is possible to prove the following proposition.
	\begin{proposition}
		Let $n>4$ and let $G = S_n \mathrm{wr} C_4$ be endowed of its action on the set $\Omega$ of right cosets of $H = S_n \times S_n \times S_{n-1} \times 1 \leq G$.
		Then $\mathcal{M}(G,\Omega) =  \{4, n, 2n-2\}$. 
	\end{proposition}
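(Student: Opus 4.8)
The plan is to adapt the argument used for Proposition~\ref{number prop}. Let $N=S_n^4$ be the base group of $G=S_n\wr C_4$, let $\sigma$ be a generator of $C_4$ chosen so that conjugation by $\sigma$ cyclically shifts the four direct factors of $N$ (the analogue of the convention fixed in the proof of Proposition~\ref{number prop}), and set $\omega=H\in\Omega$, so that $G_\omega=H$; since $H$ is core‑free in $G$, the action is faithful. Writing $G=N\cup\sigma N\cup\sigma^2 N\cup\sigma^3 N$ partitions $\Omega$ into the four $N$‑orbits $\Delta_0=\omega^N$, $\Delta_1=\omega^{\sigma N}$, $\Delta_2=\omega^{\sigma^2 N}$ and $\Delta_3=\omega^{\sigma^3 N}$. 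The first step is to compute, for $y=(y_1,y_2,y_3,y_4)\in N$, the stabilisers $G_{\omega^{\sigma^i y}}=(H^{\sigma^i})^y$, obtaining
\[
G_{\omega^{y}}=S_n\times S_n\times (S_n)_{n^{y_3}}\times 1,\qquad G_{\omega^{\sigma y}}=1\times S_n\times S_n\times (S_n)_{n^{y_4}},
\]
\[
G_{\omega^{\sigma^2 y}}=(S_n)_{n^{y_1}}\times 1\times S_n\times S_n,\qquad G_{\omega^{\sigma^3 y}}=S_n\times (S_n)_{n^{y_2}}\times 1\times S_n,
\]
where $(S_n)_m$ is the stabiliser of $m\in\{1,\dots,n\}$ and, as $y$ ranges over $N$, the ``mark'' $n^{y_j}$ takes every value in $\{1,\dots,n\}$.

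The second step is to read off the combinatorial structure. A point of $\Delta_i$ contributes, in one coordinate the trivial group (the \emph{free coordinate} of $\Delta_i$), in one coordinate a point stabiliser $(S_n)_m$ with $m$ depending only on the point (the \emph{marked coordinate}), and the full group $S_n$ in the remaining two coordinates; and the assignment is cyclic, so each of the four coordinates is the free coordinate of exactly one orbit and the marked coordinate of exactly one other. Since the pointwise stabiliser of a set of points is the direct product of the four coordinatewise intersections, and since an intersection of point stabilisers $(S_n)_{m_1},\dots,(S_n)_{m_r}$ is trivial exactly when $|\{m_1,\dots,m_r\}|\ge n-1$ (and never for fewer marks), one obtains the following criterion: a subset $B\subseteq\Omega$ is a base if and only if, for every coordinate $c$, either $B$ meets the orbit whose free coordinate is $c$, or $B$ contains at least $n-1$ points with pairwise distinct marks from the orbit whose marked coordinate is $c$. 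A routine check (delete a point and watch the offending coordinate re‑grow) then tells when such a base is minimal.

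The third step is to enumerate the minimal bases with this criterion, exactly in the spirit of the $C_3$ case. If $B$ meets all four orbits it is automatically a base, and minimality forces exactly one point from each orbit, giving cardinality $4$. If $B$ meets exactly three orbits, the coordinate freed only by the missing orbit must be killed ``expensively'', and after deleting redundant points one is pinned down — by the forcing argument of Proposition~\ref{number prop} — to a minimal base of cardinality $n$; one also checks here that missing two \emph{adjacent} orbits yields no base at all. If $B$ meets exactly two orbits, these must be \emph{opposite}, and then each of them must supply $n-1$ points with distinct marks, giving cardinality $2n-2$; and $B$ cannot meet fewer than two orbits. Collecting these three families, and in each case verifying genuine minimality, gives $\mathcal{M}(G,\Omega)=\{4,\,n,\,2n-2\}$.

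The main obstacle is the completeness half of the third step: one must show that the case distinction according to which orbits $B$ meets is exhaustive, and that in each case the minimal bases have exactly the asserted size — in particular that a base meeting exactly three orbits cannot be made smaller than $n$, which is where the interaction between the free coordinate of the missing orbit and the marked and full coordinates of the three surviving orbits is used. This is the real generalisation of the forcing argument carried out for $C_3$; the size‑$4$ and size‑$(2n-2)$ families, and all the minimality verifications, are then straightforward bookkeeping with the base criterion above.
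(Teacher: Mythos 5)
Your setup is exactly the intended adaptation of Proposition~\ref{number prop}: the stabilizer computations $G_{\omega^{\sigma^i y}}=(H^{\sigma^i})^y$ are correct, and the ``free coordinate / marked coordinate'' bookkeeping, together with the criterion that coordinate $c$ is killed either by a point of the orbit whose free coordinate is $c$ or by $n-1$ distinctly marked points of the orbit whose marked coordinate is $c$, is the right way to organise the case division. The four-orbit case (cardinality $4$), the two-orbit case (opposite orbits only, cardinality $2n-2$), and the impossibility of meeting only one orbit or two adjacent orbits are all handled correctly.

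The gap is in the three-orbit case, which you assert rather than carry out: you claim the forcing argument ``pins one down to a minimal base of cardinality $n$'', but running your own criterion gives $n+1$. Say $B$ misses $\Delta_0$, whose free coordinate is $4$. Coordinate $4$ can then only be killed by $n-1$ distinctly marked points of $\Delta_1$ (the orbit with marked coordinate $4$), so $|B\cap\Delta_1|\ge n-1$; by hypothesis $B$ also meets $\Delta_2$ and $\Delta_3$, and minimality forces exactly one point in each of these (any second point there is redundant, since the coordinate it marks is already killed by the free coordinate of the preceding surviving orbit) and exactly $n-1$ distinctly marked points in $\Delta_1$. One checks directly that such a set is indeed a minimal base, of cardinality $(n-1)+1+1=n+1$. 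The count is $n$ in the $C_3$ case of Proposition~\ref{number prop} precisely because there only \emph{one} surviving orbit besides the one supplying the $n-1$ marks; with $C_4$ there are two. So your (otherwise correct and exhaustive) case division actually yields $\mathcal{M}(G,\Omega)=\{4,\,n+1,\,2n-2\}$, and no configuration produces a minimal base of cardinality exactly $n$ when $n>4$. Either you must exhibit a configuration of size $n$ that escapes your own criterion (there is none), or the middle entry of the statement is off by one; as written, the proof does not establish the claimed set.
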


\section{The set $\mathcal{M}(G,\Omega)$  for primitive groups}\label{last}
Other than computational evidence, there is no additional confirmation that the set $\mathcal{M}(G,\Omega)$ forms an interval for primitive groups $G$. Nevertheless, 
we hold a belief in a certain level of regularity within primitive groups, prompting us to propose Conjecture~\ref{conj2}. Despite this, we do not dare to conjecture that, for each interval $X$ of natural numbers, not containing $1$, there exists a primitive group $G$ on $\Omega$ with $X=\mathcal{M}(G,\Omega)$. We leave this as yet another conjecture: we phrase it in terms of irredundant bases.
\begin{conjecture}\label{conj2_2}
There exists an interval  $X$ of positive integers, not containing $1$, such that no primitive permutation group $G$ on $\Omega$ satisfies $X=\mathcal{I}(G,\Omega)$.
	\end{conjecture}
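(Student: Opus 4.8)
The plan is to establish the conjecture by producing one explicit interval $X$ and proving that it is not realized by any primitive group. The genuinely delicate point is the choice of $X$, since the constructions already developed in this paper realize a remarkably wide range of intervals as $\mathcal{I}(G,\Omega)$ with $G$ primitive: the affine groups $\mathbb{F}_p^{\,n-1}\rtimes\mathrm{Sym}(n)$ with $p>n$, acting on the $(n-1)$-dimensional deleted permutation module, give $\mathcal{I}=\{2,\ldots,n\}$; the groups $\mathrm{P\Gamma L}_2(p^{e})$ on the projective line give $\mathcal{I}=\{4,\ldots,m\}$ for every $m\ge 4$ (choose $e$ with exactly $m-3$ prime factors counted with multiplicity, so that the pointwise stabilizer of three points is a cyclic group of that chain length); $\mathrm{Sym}(n)$ on $k$-subsets realizes many further intervals with small left endpoint (for instance $\{3,4\}$ and $\{4,5,6\}$ already arise for $n\le 6$, $k=2$); and $\mathrm{Sym}(n)$ on $n$ points realizes every singleton $\{n-1\}$. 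So a candidate $X$ must evade all of these families simultaneously, and I would pin it down by an exhaustive computation over the primitive groups of small degree, as in Section~\ref{last}, extrapolated through the infinite families; empirically a short interval $\{a,a+1\}$ with $a$ not too small looks like the most promising shape.

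Given a candidate $X=\{a,\ldots,b\}$, the strategy is the usual one: suppose $G$ is primitive on $\Omega$ with $\mathcal{I}(G,\Omega)=X$, which by Theorem~\ref{thrm:1} means exactly that $b(G,\Omega)=a$ and $I(G,\Omega)=b$, and derive a contradiction through the O'Nan--Scott types. Two crude inequalities shrink the search space at once: a descending stabilizer chain of length $b$ gives $2^{b}\le|G|$, and a base of size $a$ gives $|G|\le|\Omega|^{a}$, which together with the standard order bounds for primitive groups not containing $\mathrm{Alt}(\Omega)$ bound $|\Omega|$ and $|G|$. In the affine case $G=V\rtimes G_0$ with $G_0\le\mathrm{GL}(V)$ irreducible one has $b(G,\Omega)=1+b(G_0,V)$ and $I(G,\Omega)=1+I(G_0,V)$, so the task becomes to show that no irreducible linear group $G_0$ has $(b(G_0,V),I(G_0,V))=(a-1,b-1)$; I would run this through Aschbacher's theorem, dispatching the imprimitive and tensor-decomposable classes by additivity of the base size and of the maximal irredundant base (the analogue of Lemma~\ref{aux} and Theorem~\ref{thrm:41}) and reducing the rest to an almost simple $G_0$ modulo scalars. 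The product-action case $G\le H\wr\mathrm{Sym}(t)$ on $\Delta^{t}$ is handled similarly: by Lemma~\ref{aux} and its analogue for the full wreath product, the maximal irredundant base and the minimal base of such a group are tied to those of a single factor $H$ by additive formulas, so realizing $X$ would impose an arithmetic constraint that the candidate is chosen to violate. The simple-diagonal and twisted-wreath cases have comparatively large and well-understood base sizes and can be dealt with directly.

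The main obstacle is the almost simple case: with $S\trianglelefteq G\le\mathrm{Aut}(S)$ and $\Omega$ the coset space of a maximal subgroup, one must, family by family across the classification of finite simple groups, combine the detailed base-size results for primitive almost simple groups (non-standard actions have $b(G,\Omega)\le 7$, standard ones have $b(G,\Omega)$ known exactly) with upper bounds on $I(G,\Omega)$ coming from the theory of height and relational complexity — the circle of ideas behind Lemma~\ref{aux}, see~\cite{GLSp} — to show that the pair $(b(G,\Omega),I(G,\Omega))$ is never $(a,b)$. This is a substantial amount of casework, and it sits on top of a genuine conceptual difficulty: because so many intervals are realizable, one cannot be confident a priori that any particular $X$ really fails, so the whole argument only gets going once the computational search has been carried far enough to isolate a convincing witness. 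In short, I expect the hard part to be twofold — correctly identifying $X$, and then carrying out the CFSG-based verification in the almost simple case.
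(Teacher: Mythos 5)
The statement you are asked to prove is labelled a \emph{conjecture} in the paper, and the paper does not prove it: Section~\ref{last} only offers evidence, by exhibiting one candidate interval, $\{3,4,\ldots,12\}$, and verifying that it is not of the form $\mathcal{I}(S_n,\Omega_k)$ for the single family of primitive groups given by $S_n$ acting on $k$-subsets. That verification is short and concrete: by Theorem~\ref{thrm:1} the interval determines the pair $(b,I)$; the formula $I(S_n,\Omega_k)\in\{n-1,n-2\}$ from \cite{LG} forces $n\in\{13,14\}$, and the base-size values from \cite{MSp} show $b(S_n,\Omega_k)\neq 3$ for every admissible $k$. Nothing in the paper addresses any other primitive group.

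Your proposal, by contrast, aims at the full statement, and, as you yourself acknowledge, it is a programme rather than a proof: neither of the two essential steps --- isolating a witness interval $X$, and the O'Nan--Scott/CFSG verification that no primitive group realizes it --- is carried out. So there is a genuine gap; in fact the entire content of the conjecture is left open by your argument, just as it is in the paper. The surrounding discussion is reasonable (the reduction via Theorem~\ref{thrm:1} to the pair $(b(G,\Omega),I(G,\Omega))$, the use of additivity results such as Lemma~\ref{aux} for product actions, the warning that many intervals are realizable so the witness must be chosen with care), but several of the realizability claims you invoke to motivate the choice of $X$ are asserted without justification (for instance the exact value of $\mathcal{I}$ for the affine family and for $\mathrm{P\Gamma L}_2(p^e)$ on the projective line), and the suggestion that a short interval $\{a,a+1\}$ is the most promising shape is not substantiated. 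If your goal is to match what the paper actually establishes, the honest target is the much weaker assertion that $\{3,\ldots,12\}$ is not realized within the one explicit family of subset actions, which follows in a few lines from the cited formulas; the conjecture itself remains open both in your write-up and in the paper.
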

	As an evidence of this conjecture, we consider the symmetric group $S_n$ in its natural action on the set $\Omega_k$ of $k$-subset of $\{1,\dots,n\}$ with $1 \leq k \leq \frac{n}{2}$. \\
	The base size of this action has been recently obtained in two independents works, \cite{DVCR} and \cite{MSp}. In particular, in our example, we will use Table $1$ from \cite{MSp} the read off $b(S_n,\Omega_k)$. Note that, before these two papers, some partial results about $b(S_n, \Omega_k)$ was given by Halasi in \cite{halasi}. Indeed, if $n \geq k^2$, then
	\[
		b(S_n,\Omega_k) = \bigg \lceil \frac{2(n-1)}{k+1} \bigg \rceil. 
	\]

	 On the other side, in \cite[Theorem 1.1]{LG}, it is proved that
	\[
		I(S_n,\Omega_k) = 
		\begin{cases}
			n-1, \text{ if } \gcd(n,k)=1 \\
			n-2, \text{ otherwise.}
		\end{cases}
	\]
	Combining these results, we can prove that there exist no $n,k$, with $ k \leq n/2$, such that 
	\[
		\mathcal{I}(S_n,\Omega_k)=\{3,4,\dots,12\}.
	\]
	To prove that, we study two cases. Firstly, suppose that $\gcd(n,k) = 1$. Then,
	\[
		12 = I(S_n,\Omega_k) = n-1,
	\]
	and so $n=13$. However, from~\cite[Table~1]{MSp}, we deduce
	\begin{align*}
		&b(S_{13},\Omega_1) = 12, \\
		&b(S_{13},\Omega_2) = 8, \\
		&b(S_{13},\Omega_3) = 6, \\
		&b(S_{13},\Omega_4) = 5, \\
		&b(S_{13},\Omega_5) = 5, \\
		&b(S_{13},\Omega_6) = 4.
	\end{align*}
	Suppose now that $\gcd(n,k) > 1$. Then 
	\[
		12 = I(S_n,\Omega_k) = n-2,
	\]
	and so $n=14$. In this case, from~\cite[Table~1]{MSp}, we deduce
	\begin{align*}
		&b(S_{14},\Omega_2) = 9, \\
		&b(S_{14},\Omega_4) = 6, \\
		&b(S_{14},\Omega_6) = 5, \\
		&b(S_{14},\Omega_7) = 4.
	\end{align*}
	This shows that the interval $\{3,\dots,12\}$ does not arise as $I(S_n,\Omega_k)$ for some $n,k$.
	
	\thebibliography{10}

	\bibitem{Peter}P.~J.~Cameron, \textit{Permutation Groups}, London Mathematical Society Student Texts, Cambridge University Press, 1999.

	\bibitem{CF}P.~J.~Cameron, D.~G.~Fon-Der-Flaass, Bases for permutation groups and matroids, \textit{Eur. J. Comb.} \textbf{16} (1995), 537--544.

	\bibitem{DVCR}C.~Del Valle, C.~M.~Roney-Dougal, The base size of the symmetric group acting on subsets, \textit{arXiv preprint arXiv:2308.04360} (2023)

	\bibitem{Gap}The GAP Group, GAP -- Groups, Algorithms, and Programming, Version 4.12.2; 2022
	
	\bibitem{GLSp} N.~Gill, B.~Lod\`{a}, P.~Spiga,
	On the height and relational complexity of a finite permutation group, \textit{Nagoya Math. J.} \textbf{246} (2022), 372--411.
	
	\bibitem{LG}N.~Gill, B.~Lod\`{a}, Statistics for $S_n$ acting on $k$-sets, \textit{J. Algebra} \textbf{607} (2022), 286--299.
	
	\bibitem{halasi}Z.~Halasi, On the base size for the symmetric group acting on subsets, \textit{Studia Sci. Math. Hungar.} \textbf{49} (2012), 492--500.
	
	\bibitem{LMM}A.~Lucchini, M.~Morigi, M.~Moscatiello, Primitive permutation IBIS groups, \textit{J. Combin. Theory Ser. A} \textbf{184} (2021), Paper No. 105516, 17 pp. 
	
	\bibitem{MSp}G.~Mecerano, P.~Spiga, A formula for the base size of the symmetric group in its action on subset, \textit{ arXiv preprint arXiv:2308.02337} (2023)
	
	\bibitem{Tarski}A.~Tarski, An interpolation theorem for irredundant bases of closure structures, \textit{Discrete Math.} \textbf{12} (1975), 185--192.

\end{document}